\DeclareMathOperator{\Tr}{Tr}
\renewcommand{\Im}{\mathrm{Im}}
\renewcommand{\Re}{\mathrm{Re}}
          \newtheorem{thm}{Theorem}[section]
          \newtheorem{proposition}[thm]{Proposition}
          \newtheorem*{proposition*}{Proposition}
          \newtheorem{lemma}[thm]{Lemma}
          \newtheorem{definition}[thm]{Definition}
          \newtheorem{hyp}[thm]{Hypothesis}
          \theoremstyle{definition}
          \newtheorem{remark}[thm]{Remark}
          \newtheorem*{remark*}{Remark}
\renewcommand{\setminus}{\smallsetminus}
\begin{document}
\bibliographystyle{abbrvnat} \title{Concentration of cylindrical Wigner
  measures 
}\author{Marco Falconi}\address{I-Math, Universität Zürich; Winterthurerstrasse 190, CH-8057 Zürich}
\email{marco.falconi@math.uzh.ch} \keywords{Semiclassical analysis, Cylindrical Wigner measures, Quantum Field
  Theory.}\subjclass[2010]{Primary: 81Q20, 81S05. Secondary: 46L99, 47L90.}
\urladdr{http://user.math.uzh.ch/falconi/}\thanks{The author acknowledges the support of MIUR through the FIR
  grant 2013 \href{http://www.cond-math.it}{Condensed Matter in Mathematical Physics (Cond-Math)} (code
  RBFR13WAET). The final version of this preprint is to appear in
  \href{http://www.worldscientific.com/worldscinet/ccm}{Communications in Contemporary Mathematics} $^{\text{\copyright}}$ 2017
  World Scientific Publishing Company}

\date{\today}
\begin{abstract}
  In this note we aim to characterize the cylindrical Wigner measures associated to regular quantum states in
  the Weyl C*-algebra of canonical commutation relations. In particular, we provide conditions at the quantum
  level sufficient to prove the concentration of all the corresponding cylindrical Wigner measures as Radon
  measures on suitable topological vector spaces. The analysis is motivated by variational and dynamical
  problems in the semiclassical study of bosonic quantum field theories.
\end{abstract}
\maketitle

\onehalfspacing

\section{Introduction}
\label{sec:introduction}

In this brief note, we discuss the concentration of cylindrical Wigner measures. Cylindrical Wigner measures
on a topological vector space $V$ have been introduced in
\citep{Falconi:2016ab
} as the classical limit points, in a suitable topology, of regular quantum states in the Weyl C*-algebra of
canonical commutation relations $\mathbf{W}_h(V',\sigma)$ -- where $V'$ is the continuous dual of some
topological vector space $V$. The dual $V'$ is endowed with a symplectic form, and it is interpreted as the
classical phase space. Any cylindrical measure on a (infinite dimensional) topological vector space $V$ is a
Borel Radon measure on a ``bigger'' space $\overline{V}$ (see Section \ref{sec:conc-wign-meas}, or
\citep{MR0426084
}, for additional details). However, the space $\overline{V}$ is usually too large, in the sense that it is
not manageable to study dynamical or variational problems in the semiclassical or mean field analysis of
bosonic many-particle systems and fields. In applications \citep[see e.g.][and references thereof
contained]{Ammari:2017aa
  ,ammari:15
  ,Ammari:2014ab
  ,Ammari:2014aa
  ,2011arXiv1111.5918A
  ,MR2802894
  ,Correggi:2017aa
  ,Lewin:2014aa
  ,Lewin:2013aa
  ,MR3390788
} the attention is often restricted, due to the physical properties of the system, to quantum states whose
corresponding Wigner measures are concentrated as Radon measures on some manageable (\emph{e.g.} separable
Hilbert) topological vector space. It is therefore natural to ask for conditions on regular quantum states
that are sufficient to yield concentration, on some given space, of all the corresponding Wigner measures.

In Section \ref{sec:separ-hilb-spac}, we provide sufficient conditions to prove concentration in separable
complex Hilbert spaces (with inner product compatible with the symplectic form $\sigma$); in Section
\ref{sec:dual-nuclear-space} we provide sufficient conditions to prove concentration on continuous duals of
nuclear spaces (with any locally convex topology between the ultraweak and the Mackey topology); in
Section~\ref{sec:locally-conv-spac} we provide sufficient conditions to prove concentration in duals of Banach
spaces, endowed with the ultraweak topology. For the Weyl C*-algebra
$\mathbf{W}_h(\mathbb{F}\mathscr{H}, \Im \langle \cdot , \cdot \rangle_{\mathscr{H}})$ -- built on the
classical separable Hilbert phase space $\mathbb{F}\mathscr{H}$ associated to a complex Hilbert space
$\mathscr{H}$ -- sufficient conditions on Fock-normal quantum states to prove concentration on $\mathscr{H}$
(and on suitable Hilbert spaces included in $\mathscr{H}$) have been given in
\citep{ammari:nier:2008
  ,2011arXiv1111.5918A
}. Theorem~\ref{thm:1} can be seen as a generalization of the aforementioned results, to more general regular
quantum states on arbitrary Weyl C*-algebras. Such generalization is motivated by concrete problems in the
variational and dynamical semiclassical analysis of bosonic quantum field theories, as explained in
Sections~\ref{sec:motivation} and~\ref{sec:some-expl-exampl}. Our analysis is based on the ``cylindrical''
(finite-dimensional) pseudodifferential calculus for bosonic quantum field theories, that is outlined in
Section~\ref{sec:cylindr-quant-regul}.

\begin{remark*}
  Theorems \ref{thm:1}, \ref{thm:2}, and \ref{thm:3} can be easily adapted to regular quantum states in the
  tensor product of the C*-algebra of canonical commutation relations $\mathbf{W}_h(V',\sigma)$ with another
  C*-algebra $\mathfrak{A}$ describing additional degrees of freedom of the physical system
  \citep[see][]{Falconi:2016ab
  }. The only difference is that the Wigner measures are, in this case, vector-valued and take values in the
  space $\mathfrak{A}'_+$ of (positive) states acting on $\mathfrak{A}$.
\end{remark*}

\subsection{Motivation}
\label{sec:motivation}

This technical note is motivated by some problems we encountered in the study of the semiclassical properties
of bosonic quantum systems \citep{Ammari:2014aa
  ,Correggi:2017aa
}. Let us discuss briefly a concrete case. Let $d\geq 2$, and consider a self-adjoint and bounded from below
operator $H_h$ on the symmetric Fock space
\begin{equation*}
  \Gamma_{\mathrm{s}}\bigl(L^2 (\mathds{R}^d )\bigr)=\bigoplus_{n\in \mathds{N}}L^2_n =\mathds{C}\bigoplus_{n\in \mathds{N}_{*}}\underbrace{L^2\otimes_{\mathrm{s}}\dotsm\otimes_{\mathrm{s}}L^2}_n\; .
\end{equation*}
$H_h$ is the quantization of some classical symbol $\mathscr{E}$, and it is defined on the domain
$D\bigl(\mathrm{d}\Gamma_h(\lvert k \rvert_{}^{})\bigr)$, where
\begin{equation*}
  L^2_n\ni  \bigl( \mathrm{d}\Gamma_h(\lvert k \rvert_{}^{})\psi_{h}\bigr)_n(k_1,\dotsc,k_n)= h\sum_{j=1}^n\lvert k_j  \rvert_{}^{}\psi_{h,n}(k_1,\dotsc,k_n)\; . 
\end{equation*}
Throughout the paper, $h\in (0,1)$ denotes the semiclassical parameter. The second quantized operator
$\mathrm{d}\Gamma_h(\lvert k \rvert_{}^{})$ is the free energy of \emph{massless} bosonic scalar fields,
\emph{e.g.} radiation. Suppose now that we want to investigate the convergence, as $h\to 0$, of the ground
state energy $\underline{\sigma}(H_h)$. Let $j\in \mathds{N}$, and
$\psi_h^{(j)}\in D\bigl(\mathrm{d}\Gamma_h(\lvert k \rvert_{}^{})\bigr)$ be a vector in a normalized
minimizing sequence of $H_h$. Since $(\psi_h^{(j)})_{h\in (0,1)}$ is norm-bounded uniformly with respect to
$h$, its cluster points are cylindrical Wigner measures
\citep[see][]{Falconi:2016ab
}. If all the cluster points concentrate as measures in the classical energy space
$\mathscr{F}\dot{H}^{1/2}(\mathds{R}^d)= L^2 (\mathds{R}^d, \lvert k \rvert_{}^{}\mathrm{d}k )$, we can use
the information to prove the convergence
\begin{equation*}
  \lim_{h\to 0}\underline{\sigma}(H_h)=\inf_{\alpha\in \mathscr{F}\dot{H}^{1/2}}\mathscr{E}(\alpha)\; ;
\end{equation*}
see \citep{Ammari:2014aa
} for a more detailed explanation of the strategy. It is well-known
\citep{ammari:nier:2008
} that
\begin{align}
  \label{eq:25}
  \bigl\langle \varphi_h  &,N_h \varphi_h \bigr\rangle_{\Gamma_{\mathrm{s}}}=\bigl\langle \varphi_h  , \mathrm{d}\Gamma_h(1) \varphi_h \bigr\rangle_{\Gamma_{\mathrm{s}}}\leq \mathcal{K}
\end{align}
is sufficient to prove concentration of the cluster points of $(\psi_h)_{h\in (0,1)}$ in
$L^2 (\mathds{R}^d )$. However, for $(\psi_h^{(j)})_{h\in (0,1)}$ we have the sole information
\begin{equation}
  \label{eq:24}
  \bigl\langle \psi^{(j)}_h  , \mathrm{d}\Gamma_h(\lvert k  \rvert_{}^{}) \psi^{(j)}_h \bigr\rangle_{\Gamma_{\mathrm{s}}}\leq \mathcal{K}\; .
\end{equation}
In addition, since the symmetric difference $L^2\ominus \mathscr{F}\dot{H}^{1/2}$ is not empty, there are families of
quantum states $(\varphi_h)_{h\in (0,1)}$ that converge to Wigner measures that are Radon on
$\mathscr{F}\dot{H}^{1/2}$ and concentrated outside of $L^2$. Therefore it is not possible to restrict the
analysis to vectors that satisfy both \eqref{eq:25} and \eqref{eq:24}; and in fact the miminimizing sequence
might not satisfy \eqref{eq:25}. As we will prove in Theorem~\ref{thm:1}, bound~\eqref{eq:24} is sufficient to
have concentration of all the corresponding cylindrical Wigner measures in $\mathscr{F}\dot{H}^{1/2}$. Let us
formulate here the result explicitly, and give an outline of the proof. This would serve to explain the key
idea behind the proof of the more general Theorem~\ref{thm:1}.
\begin{proposition*}
  Let $(\varphi_h)_{h\in (0,1)}\subset \Gamma_{\mathrm{s}}\bigl(L^2 (\mathds{R}^d )\bigr)$. Suppose there exists $\mathcal{K}>0$ such that
  \begin{equation*}
    \bigl\langle \varphi_h  , \mathrm{d}\Gamma_h(\lvert k  \rvert_{}^{}) \varphi_h \bigr\rangle_{\Gamma_{\mathrm{s}}}\leq \mathcal{K}\; .
  \end{equation*}
  Then any Wigner measure that is a cluster point of $(\varphi_h)_{h\in (0,1)}$ is a Borel Radon measure on
  $\mathscr{F}\dot{H}^{1/2}(\mathds{R}^d)$.
\end{proposition*}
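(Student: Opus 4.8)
The plan is to exploit the fact that the ``free energy'' operator is, after quantization, essentially the squared norm of the target space. Indeed $\mathscr{F}\dot H^{1/2}(\mathds{R}^d)=L^2(\mathds{R}^d,\lvert k\rvert\,\mathrm{d}k)$ is the completion of $D(\lvert k\rvert^{1/2})$ for the norm $\lVert\alpha\rVert_{\mathscr{F}\dot H^{1/2}}^2=\langle\alpha,\lvert k\rvert\alpha\rangle_{L^2}$, and $\langle\alpha,\lvert k\rvert\alpha\rangle_{L^2}$ is precisely the Wick symbol of $\mathrm{d}\Gamma_h(\lvert k\rvert)$. Keeping in mind that a Wigner cluster point $\mu$ is a priori only a cylindrical measure, the strategy is twofold: (i) transfer the quantum bound into the a priori estimate $\int\lVert\alpha\rVert_{\mathscr{F}\dot H^{1/2}}^2\,\mathrm{d}\mu(\alpha)\le\mathcal{K}$ on the cylindrical second moment; and (ii) deduce from the finiteness of this weighted second moment that $\mu$ is tight, hence a genuine Borel Radon measure carried by $\mathscr{F}\dot H^{1/2}$. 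This is exactly the mechanism underlying Theorem~\ref{thm:1}.

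For step (i), since $\mathrm{d}\Gamma_h(\lvert k\rvert)$ is unbounded I would first truncate the symbol to $\chi_R(k)=\lvert k\rvert\,\mathds{1}_{\lvert k\rvert\le R}$, which commutes with $\lvert k\rvert$ and satisfies $0\le\chi_R\le\lvert k\rvert$; hence $\langle\varphi_h,\mathrm{d}\Gamma_h(\chi_R)\varphi_h\rangle\le\mathcal{K}$ uniformly in $h$ and $R$. The cylindrical pseudodifferential calculus of Section~\ref{sec:cylindr-quant-regul} applies to the bounded observable $\mathrm{d}\Gamma_h(\chi_R)$ and yields, along the subsequence defining $\mu$, the lower-semicontinuity estimate $\int\langle\alpha,\chi_R\alpha\rangle_{L^2}\,\mathrm{d}\mu(\alpha)\le\liminf_{h\to0}\langle\varphi_h,\mathrm{d}\Gamma_h(\chi_R)\varphi_h\rangle\le\mathcal{K}$, where only this (robust) direction of the semiclassical limit is needed, a loss of mass at infinity affecting solely the reverse inequality. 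Letting $R\to\infty$ and invoking monotone convergence on the left-hand side then produces the moment bound $\int\lVert\alpha\rVert_{\mathscr{F}\dot H^{1/2}}^2\,\mathrm{d}\mu(\alpha)\le\mathcal{K}$.

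For step (ii), I would fix an orthonormal basis $(e_n)_{n\in\mathds{N}}$ of $\mathscr{F}\dot H^{1/2}$ and read the moment bound as $\sum_n\int\lvert\langle\alpha,e_n\rangle_{\mathscr{F}\dot H^{1/2}}\rvert^2\,\mathrm{d}\mu\le\mathcal{K}$, each summand being a bona fide finite-dimensional cylindrical moment. Chebyshev's inequality applied to the norm controls the mass of large balls, while the convergence of the series provides, for every $\varepsilon>0$, integers $N_j$ such that the tail $\sum_{n\ge N_j}\lvert\langle\alpha,e_n\rangle\rvert^2$ exceeds $2^{-j}$ only on a set of $\mu$-measure $<\varepsilon\,2^{-j}$. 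The intersection of the corresponding sublevel sets is bounded and has uniformly vanishing tails in the basis $(e_n)$, hence is relatively compact in $\mathscr{F}\dot H^{1/2}$, and its complement has measure $<\varepsilon$. This Prokhorov-type tightness upgrades the cylindrical measure $\mu$ to a Borel Radon measure concentrated on $\mathscr{F}\dot H^{1/2}$, which is the assertion.

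I expect the main obstacle to be step (i), and specifically the rigorous justification of the lower-semicontinuity inequality for an \emph{unbounded}, non-compact quadratic observable through the finite-dimensional cylindrical calculus: one must interchange the limits $h\to0$ and $R\to\infty$ while knowing a priori only that $\mu$ is cylindrical, and must control the commutator and ordering remainders (which carry powers of $h$) produced when the Wick quantization of the truncated symbols $\chi_R$ is approximated by genuinely cylindrical ones. By contrast, the tightness argument of step (ii) is comparatively standard once the moment bound is in hand, being the Hilbert-space concentration criterion already exploited for the number operator in \citep{ammari:nier:2008,2011arXiv1111.5918A}.
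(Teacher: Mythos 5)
Your two-step plan (first a second-moment bound for $\mu$, then a Chebyshev--Prokhorov tightness argument) is a reasonable reorganization, but step (i) as written has a genuine gap, and it sits exactly where the paper places its key idea. The truncation $\chi_R(k)=\lvert k\rvert\mathds{1}_{\lvert k\rvert\le R}$ is a \emph{frequency} cutoff, not a finite-rank one: $\mathrm{d}\Gamma_h(\chi_R)$ is still an unbounded operator (boundedness of the one-particle operator $\chi_R$ does not make its second quantization bounded), and its Wick symbol $\langle\alpha,\chi_R\alpha\rangle_{L^2}$ is \emph{not} a cylindrical function, since it does not factor through any finite-codimensional quotient of $V$. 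Consequently the cylindrical calculus of Section~\ref{sec:cylindr-quant-regul} does not apply to it and, more seriously, the quantity $\int\langle\alpha,\chi_R\alpha\rangle_{L^2}\,\mathrm{d}\mu(\alpha)$ that your ``lower-semicontinuity estimate'' is supposed to control is undefined: a cluster point $\mu$ is a priori only a cylindrical measure, and non-cylindrical functions cannot be integrated against it. Invoking the lower semicontinuity of \citep{ammari:nier:2008} here is circular: there it is proved for Wigner measures already known to be Radon on $L^2$, which follows from a bound on $\mathrm{d}\Gamma_h(1)$ --- precisely the bound that is unavailable in this proposition, where $\mu$ may charge $\mathscr{F}\dot{H}^{1/2}\setminus L^2$.

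The missing idea, which the paper calls the key step, is to truncate along a basis of the \emph{target} space rather than in frequency: take an orthonormal basis $(e_j)_{j\in\mathds{N}}$ of $\mathscr{F}\dot{H}^{1/2}$, set $\varepsilon_j(k)=\lvert k\rvert^{1/2}e_j(k)\in L^2$, and use the modified creation and annihilation operators to form
\begin{equation*}
  \mathbf{N}_{h,R}=\sum_{j=0}^{R}\mathbf{a}^{*}_h(\varepsilon_j)\mathbf{a}_h(\varepsilon_j)\leq \mathrm{d}\Gamma_h(\lvert k\rvert)\; ,
\end{equation*}
whose Wick symbol $q_R(\alpha)=\sum_{j\le R}\lvert\langle e_j,\alpha\rangle_{\mathscr{F}\dot{H}^{1/2}}\rvert^{2}$ \emph{is} cylindrical. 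The finite-dimensional semiclassical calculus (Weyl--Wick ordering remainders of size $O(h)$, G\aa{}rding-type inequalities, Lemma~\ref{lemma:1} and Proposition~\ref{prop:2}) then yields $\int_{V/\Phi_R} q_R\,\mathrm{d}\mu_{\Phi_R}\le\mathcal{K}$ for every $R$, which is the only rigorous meaning your ``cylindrical second moment'' can have. With that in hand, your step (ii) does go through, provided you phrase tightness in terms of the marginals (subsets of $\mathscr{F}\dot{H}^{1/2}$ have no $\mu$-measure before concentration is established): Chebyshev gives tail bounds on $\mu_{\Phi_R}$ uniform in $R$, which is the criterion \eqref{eq:9} used in the paper via Prokhorov's theorem. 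Note the remaining difference of route: the paper proves the uniform tail bounds directly by pseudodifferential cutoff inequalities and deduces the moment bound \eqref{eq:10} only \emph{after} concentration, whereas you would get moments first and tails by Chebyshev; both orders are viable, but both stand or fall on the identification $e_j\mapsto\varepsilon_j$ (the map $\mathbf{m}$ of Hypothesis~\ref{hyp:1}), which your proposal never makes.
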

\begin{proof}[Sketch of the proof]
  Let $G\subset L^2$ be a finite dimensional subspace with orthonormal basis $\{g_j\}_{j\in \{0,\dotsc,n\}}$; there is a
  canonical identification of the Fock-space creation and annihilation operators
  $\{a^{\#}_h(g_j)\}_{j\in \{0,\dotsc,n\}}$ with the standard creation and annihilation operators associated
  to the Heisenberg group \citep[see
  e.g.][]{ammari:nier:2008
  }
  \begin{equation*}
    \mathbf{H}(\mathbb{F}G, \Im \langle \cdot , \cdot\rangle_2) =\mathbf{H}\bigl(\mathrm{span}_{\mathds{R}}(g_1, i g_{1},\dotsc,g_n,i g_n),\Im \langle \cdot , \cdot\rangle_2\bigr)\; .
  \end{equation*}
  Since we want to prove concentration of measures on a different space, namely $\mathscr{F}\dot{H}^{1/2}$, we
  need to make an alternative identification. This is the key step in the proof, hence let us explain in
  detail how to make the identification.

  Let $(e_j)_{j\in \mathds{N}}\subset \mathscr{S}(\mathds{R}^d)$ be an orthonormal basis of the Hilbert space
  $\mathscr{F}\dot{H}^{1/2}(\mathds{R}^d)$. Then
  \begin{equation*}
    E_R=\mathrm{span}_{\mathds{C}}\{e_0,e_1,\dotsc,e_R\}\; ,\; R\in \mathds{N}
  \end{equation*}
  is an increasing sequence of finite dimensional complex Hilbert spaces ordered by inclusion, such that
  $\bigcup_{R\in \mathds{N}}E_R=\mathscr{F}\dot{H}^{1/2}$. Equivalently, each $E_R$ can be seen as a real
  symplectic space $\mathbb{F}E_R$ of dimension $2R$ with symplectic form
  $\sigma_E(\,\cdot \,,\,\cdot \,)=\Im \, \langle \,\cdot\, , \,\cdot\,
  \rangle_{\mathscr{F}\dot{H}^{1/2}}$. Now let
  $(\varepsilon_j)_{j\in \mathds{N}}\subset \mathscr{S}(\mathds{R}^d)$ be defined by $\varepsilon_j(k)=\lvert k \rvert_{}^{1/2}e_j(k)$, and
  \begin{equation*}
    \mathcal{E}_R=\mathrm{span}_{\mathds{C}}\{\varepsilon_0,\varepsilon_1,\dotsc,\varepsilon_R\}\; ,\; \sigma_{\mathcal{E}}(\,\cdot \,,\,\cdot \,)=\Im \langle\,\cdot \,   , \,\cdot \, \rangle_2\; .
  \end{equation*}
  Clearly, $(\mathbb{F}E_R,\sigma_E)$ and $(\mathbb{F}\mathcal{E}_R,\sigma_{\mathcal{E}})$ are the same symplectic spaces under the
  identification $e_j\mapsto \varepsilon_j$ (or equivalently $E_R$ and $\mathcal{E}_R$ are the same complex Hilbert spaces).

  Consider now the finite-dimensional Heisenberg group
  $\mathbf{H}(\mathbb{F}E_R,\sigma_E)$; since
  $E_R\subset L^2$, the Heisenberg group has a unitary representation in $\Gamma_{\mathrm{s}}(L^2)$, given by
  \begin{equation*}
    e^{it}W_h(\lvert\, \cdot \, \rvert_{}^{1/2}e(\cdot ))\; ,\; (e,t)\in E_R\times \mathds{R}\; ;
  \end{equation*}
  where the Weyl operators are customarily defined by means of the usual creation and annihilation operators
  \begin{equation*}
    W_h(f)=e^{i (a^{*}_h(f)+a_h(f))}\;,\quad \forall f,g\in L^2\; [a_h(f),a^{*}_h(g)]=h\langle f  , g \rangle_2\; .
  \end{equation*}
  On the other hand, $\mathrm{d}\Gamma_h(\lvert k \rvert_{}^{})$ is the Wick quantization of
  $\lVert z \rVert_{\mathscr{F}\dot{H}^{1/2}}^2$. It follows that
  \begin{equation*}
    \mathrm{d}\Gamma_h(\lvert k \rvert_{}^{})=\sum_{j\in \mathds{N}}^{}\mathbf{a}_h^{*}(\varepsilon_j)\mathbf{a}_h(\varepsilon_j)=\mathbf{N}_h\; ,
  \end{equation*}
  where the modified creation and annihilation operators $\mathbf{a}^{\#}_h(\cdot )$ are defined by
  \begin{align*}
    \bigl(\mathbf{a}_h(f)\psi_h\bigr)_n(k_1,\dotsc,k_n)&=\sqrt{h(n+1)}\int_{\mathds{R}^d}^{}\lvert k  \rvert_{}^{1/2} f(k)\psi_{h,n+1}(k,k_1,\dotsc,k_n)  \mathrm{d}k\;;\\
    \bigl(\mathbf{a}_h^{*}(f)\psi_h\bigr)_n(k_1,\dotsc,k_n)&=\sqrt{h/n}\sum_{i=1}^n\lvert k_i  \rvert_{}^{1/2}f(k_i)\psi_{h,n-1}(k_1,\dotsc,\underline{k}_j,\dotsc,k_n)\; ;
  \end{align*}
  where $\underline{k}$ means that the variable is missing. The restriction of
  $\lVert z \rVert_{\mathscr{F}\dot{H}^{1/2}}^2$ to $E_R$ takes the form
  \begin{equation*}
    \sum_{j=0}^R\biggl\lvert \int_{\mathds{R}^d}^{}\lvert k  \rvert_{}^{1/2}\bar{z}(k) \varepsilon_j(k)  \mathrm{d}k  \biggr\rvert_{}^2\; ,
  \end{equation*}
  and therefore its Wick quantization is
  \begin{equation*}
    \mathrm{d}\Gamma_h(\lvert k \rvert_{}^{})_R=\sum_{j=0}^R \mathbf{a}_h^{*}(\varepsilon_j)\mathbf{a}_h(\varepsilon_j)=\mathbf{N}_{h,R}\; .
  \end{equation*}
  In addition, by means of the modified creation and annihilation operators it is possible to write the Weyl
  operators $W_h(\lvert \,\cdot \,  \rvert_{}^{1/2}e(\cdot ))$, $e\in E_R$, as
  \begin{equation*}
    W_h(\lvert \,\cdot \,  \rvert_{}^{1/2}e(\cdot ))=e^{i (\mathbf{a}^{*}_h(e)+\mathbf{a}_h(e))}\; .
  \end{equation*}
  Therefore $\{\mathbf{a}^{\#}_h(e_j)\}_{j\in \{0,\dotsc,R\}}$ are identified with the usual creation and annihilation
  operators on the finite-dimensional Fock representation $\Gamma_{\mathrm{s}}(\mathds{C}^{R+1})$, and
  $\mathds{C}^{R+1}\cong E_R$ as complex Hilbert spaces.

  Let us remark again that the identifications above are the key ingredient that allows us to use the standard
  tools of pseudodifferential calculus associated to the Heisenberg group
  $\mathbf{H}(\mathbb{F}E_R,\sigma_E)$. The result is then proved combining such finite-dimensional
  pseudodifferential techniques with a Prokhorov-type tightness argument on the projective family of
  cylindrical measures, exploiting the fact that $\bigcup_{R\in \mathds{N}}E_R=\mathscr{F}\dot{H}^{1/2}$. The
  details can be found in the proof of Theorem~\ref{thm:1}.
\end{proof}

Other applications of Theorem~\ref{thm:1} that are important for the semiclassical analysis of physical
problems in bosonic quantum field theory will be given in Section~\ref{sec:some-expl-exampl}.

\section{Cylindrical semiclassical analysis}
\label{sec:cylindr-quant-regul}

Let $V$ be a real locally convex space. A cylindrical measure on $V$ is a projective family
$M=(\mu_{\Phi})_{\Phi\in F(V)}$ of finite Borel Radon measures on $V/\Phi$, where $F(V)$ is the set of
\emph{closed} subspaces of $V$ with finite codimension. To every $\Phi\in F(V)$, there corresponds the finite
dimensional polar (orthogonal) $\Phi^{\circ}\subset V'$, where $V'$ is the continuous dual of
$V$.

\begin{definition}[Cylindrical symbol]
  \label{def:1}
  A function $f:V\to \mathds{C}$ is a cylindrical symbol with base $\Phi\in F(V)$ iff there exists a function
  $f_{\Phi}:V/\Phi\to \mathds{C}$ such that
  \begin{equation*}
    \forall v\in V\,,\, f(v)=f_{\Phi}([v]_{\Phi})\; .
  \end{equation*}
\end{definition}

Let $f$ be a cylindrical symbol, $f_{\Phi}$ the corresponding finite dimensional function. If $f_{\Phi}$ is
measurable, it is possible to define its cylindrical integral with respect to any cylindrical measure $\mu$:
\begin{equation}
  \label{eq:2}
  \int_V^{\mathrm{(cyl)}}\mspace{-15mu}f(v)  \mathrm{d}M(v)=\int_{V/\Phi}^{}f_{\Phi}(w)  \mathrm{d}\mu_{\Phi}(w)\; .
\end{equation}
If we denote by $p_{\Phi}:V\to V/\Phi$ the canonical projection onto the equivalence classes,
$^{\mathrm{t}}p_{\Phi}:(V/\Phi)'\to \Phi^{\circ}$ is an isomorphism. It follows that for any $f_{\Phi}\in L^1 (V/\Phi )$,
\begin{equation}
  \label{eq:11}
  \int_V^{\mathrm{(cyl)}}\mspace{-15mu}f(v)  \mathrm{d}M(v)=\int_{V/\Phi}^{}f_{\Phi}(w)  \mathrm{d}\mu_{\Phi}(w)=\int_{\Phi^{\circ}}^{}\hat{f}_{\Phi} (\xi)\hat{M}(2\pi \xi)  \mathrm{d}L_{\Phi^{\circ}}(\xi)\; .
\end{equation}
Here $\hat{M}$ is the Fourier transform\footnote{We define the Fourier transform of a distribution both as an
  isometry on $L^2$ and an algebra homomorphism on $L^1$, \emph{i.e.}
  $\hat{f}(\xi)=\int_{}^{}e^{-2\pi i\xi\cdot x}f(x) \mathrm{d}x$. On the contrary we define, following common
  practice, the Fourier transform of a measure by $\hat{\mu}(\xi)=\int_{}^{}e^{i \xi(x)} \mathrm{d}\mu(x)$.}
of $M$, and $L_{\Phi^{\circ}}$ the Lebesgue measure on $\Phi^{\circ}$.

Now let us suppose that the continuous dual $V'$ is endowed with a symplectic form
$\sigma:V'\times V'\to \mathds{R}$, \emph{i.e.} an antisymmetric, non degenerate, and bilinear form. Then it
is possible to construct in a unique fashion the Weyl C*-algebra of canonical commutation relations
$\mathbf{W}(V',\sigma)$. Let us remark that we consider the Weyl C*-algebra to depend on a semiclassical
parameter $h$, appearing in the so-called Weyl's relations among the generators $\{W(\xi)\,,\, \xi\in V'\}$ of
the algebra:
\begin{equation}
  \label{eq:3}
  \forall \xi,\zeta\in V'\,,\, W(\xi)W(\zeta)=e^{-i h \sigma(\xi,\zeta)}W(\xi+\zeta)\; .
\end{equation}
From time to time, to avoid confusion, the dependence on $h$ of $\mathbf{W}_h(V',\sigma)$ will be made
explicit.

The regular states on $\mathbf{W}_h(V',\sigma)$ play an important role to study its semiclassical behavior
$h\to 0$; their definition is recalled below.
\begin{definition}[Regular states]
  \label{def:2}
  A positive element $\omega\in \mathbf{W}(V',\sigma)'$ of the continuous dual of the Weyl C*-algebra is a \emph{regular
    state} iff the $\mathds{R}$-action
  \begin{equation*}
    t\mapsto \omega\bigl(W(t\xi)\bigr)
  \end{equation*}
  is continuous for any $\xi\in V'$. A regular state is \emph{normalized} iff $\omega\bigl(W(0)\bigr)=1$.
\end{definition}

Given any $\omega\in \mathbf{W}(V',\sigma)'$, let us denote by $( H_{\omega}, \pi_{\omega}, \Omega_{\omega} )$ the corresponding GNS
representation of $\mathbf{W}(V',\sigma)$. Let us recall that $H_{\omega}$ is a Hilbert space,
\begin{equation*}
  \pi_{\omega}: \mathbf{W}(V',\sigma)\to \mathscr{B}(H_{\omega})
\end{equation*}
a *-homomorphism, and $\Omega_{\omega}\in H_{\omega}$ the canonical cyclic vector of the representation. If
$\omega$ is regular, it is possible to define -- by Stone's theorem -- the self-adjoint generator of the unitary
group $\pi_{\omega}\bigl(W(\mathds{R} \xi)\bigl)$ for any $\xi\in V'$. Let us denote such generator by
\begin{equation}
  \label{eq:4}
  \varphi_{\omega}(\xi)=\frac{\mathrm{d}\pi_{\omega}\bigl(W(t\xi)\bigl)}{\mathrm{d}t}\Bigr\rvert_{t=0}\; .
\end{equation}
For any finite dimensional symplectic subspace\footnote{If $V$ is locally convex, $F\subset V'$ is a subspace of
  finite dimension iff there exists $\Phi\in F(V)$ such that $F=\Phi^{\circ}$. Therefore we adopt directly the
  notation $\Phi^{\circ}$ for finite dimensional subspaces of $V'$.} $\Phi^{\circ}\subset V'$, the operators
$\varphi_{\omega}(\Phi^{\circ})$ share a common dense set of analytic vectors \citep[][Lemma
5.2.12]{MR1441540
}. In order to introduce the well-known creation and annihilation operators $a^{*}_{\omega}(\Phi^{\circ})$ and
$a_{\omega}(\Phi^{\circ})$, it is useful to see the symplectic $\Phi^{\circ}$ as a complex finite dimensional
inner product space. Let us denote by $J_{\sigma}: \Phi^{\circ}\to \Phi^{\circ}$, $J^2_{\sigma}=-1$, a linear
complex structure on $\Phi^{\circ}$ such that
\begin{equation}
  \label{eq:12}
  \sigma(J_{\sigma}\,\cdot\, ,J_{\sigma}\,\cdot \,)=\sigma(\,\cdot \,,\,\cdot \,)\; .
\end{equation}
Then $\Phi^{\circ}$ (of real dimension $2n$, $n\in \mathds{N}$) can be seen as a complex vector space (of complex
dimension $n$) defining
\begin{equation*}
  (a+ib)\xi=a\xi+bJ_{\sigma}\xi\; ;
\end{equation*}
and it is an inner product space by means of the Hermitian form
\begin{equation*}
  \sigma(\,\cdot \,,J_{\sigma}\,\cdot \,)+i\sigma(\,\cdot \,,\,\cdot \,)\; .
\end{equation*}
The space $\Phi^{\circ}$ is the polar of some $\Phi\in F(V)$, and therefore
\begin{equation*}
  V/\Phi \overset{\imath_{\sigma}}{\cong} (V/\Phi)'\overset{^{\mathrm{t}}p_{\Phi}}{\cong} \Phi^{\circ} \; .
\end{equation*}
The isomorphism $\imath_{\sigma}$ is chosen in a way such that for any $\xi\in \Phi^{\circ}$, and $w\in V/\Phi$,
\begin{equation*}
  \sigma\bigl(\xi,J_{\sigma}\, ^{\mathrm{t}}p_{\Phi} \,\circ\, \imath_{\sigma}(w)\bigr)= \, ^{\mathrm{t}}p_{\Phi}^{-1}(\xi)(w)\; .
\end{equation*}
The creation and annihilation operators are then defined for any $\xi\in \Phi^{\circ}$ by
\begin{equation}
  \label{eq:5}
  \begin{aligned}
    a^{*}_{\omega}(\xi)&=\frac{1}{2}\bigl(\varphi_{\omega}(\xi)-i\varphi_{\omega}(i\xi) \bigr)\; ,\\
    a_{\omega}(\xi)&=\frac{1}{2}\bigl(\varphi_{\omega}(\xi)+i\varphi_{\omega}(i\xi) \bigr)\; .
  \end{aligned}
\end{equation}
They are closed, and adjoint of each other when defined on
$D\bigl(\varphi_{\omega}(\xi)\bigr)\cap D\bigl(\varphi_{\omega}(i\xi)\bigl)$. The self-adjoint number operator
is a combination of the creation and annihilation operators along a (complex) orthonormal basis of
$\Phi^{\circ}$. Let $\{\zeta_j\}_{j=1}^n$ be an o.n.b. of $\Phi^{\circ}$, then the number operator associated
to $\Phi\in F(V)$ is defined by
\begin{equation}
  \label{eq:6}
  N_{\omega,\Phi}=\sum_{j=1}^na_{\omega}^{*}(\zeta_j)a_{\omega}(\zeta_j)\; .
\end{equation}
Finally, let us define the scale of Hilbert spaces $(H_{\omega,\Phi}^{\delta})_{\delta\in \mathds{R}}$ associated to the number
operator. For any $\delta\in \mathds{R}^+$, $H_{\omega,\Phi}^{\delta}=D(N^{\delta}_{\omega,\Phi})$ endowed with the
norm
\begin{equation*}
  \lVert \langle N_{\omega,\Phi} \rangle_{}^{\delta} \,\cdot\, \rVert_{H_{\omega}}^{}\; ;
\end{equation*}
$H_{\omega,\Phi}^{-\delta}$ is the completion of $H_{\omega}$ with respect to the norm
\begin{equation*}
  \lVert \langle N_{\omega,\Phi} \rangle_{}^{-\delta} \,\cdot\, \rVert_{H_{\omega}}^{}\; .
\end{equation*}
Here $\langle \,\cdot\, \rangle_{}$ stands for $(\lvert \,\cdot\, \rvert_{}^2+1)^{\frac{1}{2}}$. The scale of spaces
$(H_{\omega,\Phi}^\delta)_{\delta\in \mathds{R}}$ satisfies
\begin{equation*}
  \forall \delta\leq \delta '\;,\; H_{\omega,\Phi}^{\delta}\subseteq H_{\omega,\Phi}^{\delta '}\; .
\end{equation*}
Let us denote by $\mathscr{S}_{\omega,\Phi}$ the nuclear space
\begin{equation}
  \label{eq:7}
  \mathscr{S}_{\omega,\Phi}=\bigcap_{\delta\in \mathds{N}}H_{\omega,\Phi}^{\delta}\; ,
\end{equation}
and by $\mathscr{S}'_{\omega,\Phi}$ its continuous dual
\begin{equation}
  \label{eq:8}
  \mathscr{S}'_{\omega,\Phi}=\bigcup_{\delta\in \mathds{N}}H_{\omega,\Phi}^{-\delta}\; .
\end{equation}

The definitions above are natural to study the pseudodifferential calculus for cylindrical symbols on $V$
(with base $\Phi\in F(V)$ of codimension $2n$), for it reduces to the standard finite-dimensional
pseudodifferential calculus. In fact, the Weyl operators $\pi_{\omega}\bigl(W(\Phi^{\circ})\bigr)$, and the
creation and annihilation operators $a^{\#}_{\omega}(\Phi^{\circ})$ are equivalent to their counterparts on
the Schrödinger or Bargmann-Fock representation of the finite dimensional Weyl C*-algebra
$\mathbf{W}(\Phi^{\circ},\sigma)$; while $\mathscr{S}_{\omega,\Phi}$ plays the role of the Schwartz space
$\mathscr{S}(\mathds{R}^n)$, and $\mathscr{S}'_{\omega,\Phi}$ of the tempered distributions
$\mathscr{S}'(\mathds{R}^n)$. We assume that the reader is familiar with the techniques and results of
pseudodifferential calculus and semiclassical analysis in finite dimensions \citep[][are just a few of the
monographs on the subject]{MR983366
  ,HeNi
  , MR2304165
  ,MR2952218
}. As a reference, in the remaining of this section we formulate, in the cylindrical setting, the results that
we will use the most.

\begin{definition}[Weyl quantization of a cylindrical symbol]
  \label{def:3}
  Let $f$ be a cylindrical symbol on $V$ with base $\Phi\in F(V)$, such that $\hat{f}_{\Phi}\in L^1(\Phi^{\circ})$. Then the Weyl
  quantization $\mathrm{Op}_{\frac{1}{2}}^h(f)\in \mathbf{W}(V',\sigma)$ of $f$ is defined by the Bochner integral
  \begin{equation*}
    \mathrm{Op}_{\frac{1}{2}}^h(f)=\int_{\Phi^{\circ}}^{}\hat{f}_{\Phi} (\xi)W_h(2\pi\xi)\,  \mathrm{d}L_{\Phi^{\circ}}(\xi)\; .
  \end{equation*}

  On any regular representation $(H_{\omega},\pi_{\omega},\Omega_{\omega})$ of $\mathbf{W}(V',\sigma)$, and for any
  $f_{\Phi}\in \mathscr{S}'(V/\Phi)$, the Weyl quantization
  $\pi_{\omega}\bigl(\mathrm{Op}_{\frac{1}{2}}^h(f)\bigr)$ is a continuous map from
  $\mathscr{S}_{\omega,\Phi}$ to $\mathscr{S}'_{\omega,\Phi}$ defined by
  \begin{equation*}
    \forall \psi,\varphi\in \mathscr{S}_{\omega,\Phi}\; ,\;  \bigl\langle\psi,   \pi_{\omega}\bigl(\mathrm{Op}_{\frac{1}{2}}^h(f)\bigr)\varphi\bigr\rangle_{H_{\omega}}= \bigl(\, \hat{f}_{\Phi}\, , \, \bigl\langle \psi,\pi_{\omega}(W_h(2\pi\,\cdot\, ))\varphi\bigr\rangle_{H_{\omega}} \,\bigr)_{\mathscr{S}'\times \mathscr{S}}\; .
  \end{equation*}
\end{definition}

In Definition~\ref{def:3}, we stressed the dependence on the semiclassical parameter $h$ of the Weyl
quantizations (through the Weyl operators $W_h(2\pi\xi)$). The semiclassical behavior of regular quantum
states is obtained studying the corresponding generating functional. The generating functional of a state
$\omega$ is defined for any $\xi\in V'$ as
\begin{equation*}
  G_{\omega}(\xi)=\omega\bigl(W(\xi)\bigr)\; .
\end{equation*}
The generating functional defines uniquely regular states, \emph{i.e.} there is a bijection between regular
states and functions $G:V'\to \mathds{C}$ that are continuous when restricted to any finite dimensional
subspace of $V$, and that are ``almost'' of positive type:
\begin{equation*}
  \sum_{i,j\in F}^{}G(\xi_i-\xi_j)e^{ih\sigma(\xi_i,\xi_j)}\bar{\lambda}_j\lambda_i\geq 0\;,
\end{equation*}
for any finite index set $F$, complex numbers $\{\lambda_i\}_{i\in F}$, and $\{\xi_i\}_{i\in F}\subset V'$
\citep{MR0128839
}. On the other hand, there is a bijection (the Fourier transform) between cylindrical measures on $V$ and
functions $\Gamma:V'\to \mathds{C}$ that are continuous when restricted to any finite dimensional subspace of
$V$, and of positive type:
\begin{equation*}
  \sum_{i,j\in F}^{}\Gamma(\xi_i-\xi_j)\bar{\lambda}_j\lambda_i\geq 0\;.
\end{equation*}
Therefore the topology of simple convergence on $\mathds{C}^{V'}$ induces a topology $\mathfrak{T}$ on the
disjoint union of the sets of regular quantum states on $\mathbf{W}_h(V',\sigma)$, $h>0$, together with the
cylindrical measures on $V$ \citep[see][]{Falconi:2016ab
}. The Wigner or semiclassical measures associated to a generalized sequence
$(\omega_{h_{\beta}})_{\beta\in B}$ of regular states with no loss of mass are its $\mathfrak{T}$-cluster
points as $h_{\beta}\to 0$ (more precisely, they are the $\mathfrak{P}\vee \mathfrak{T}$-cluster points, where
$\mathfrak{P}$ is the topology induced by the ultraweak convergence of the finite dimensional measures, and
$\mathfrak{P}\vee \mathfrak{T}$ is the upper bound topology). The ``no loss of mass condition'' is automatically
satisfied if we assume either Hypothesis~\ref{hyp:2} or~\ref{hyp:4}.
\begin{proposition}[\citep{Falconi:2016ab
  }]
  \label{prop:1}
  Let $(\omega_{h_{\gamma}})_{\gamma\in C}$, $h_{\gamma}\to 0$, be a uniformly bounded net of regular states on
  $\bigl(\mathbf{W}_{h_{\gamma}}(V',\sigma)\bigr)_{\beta\in B}$ with no loss of mass.
  Then the set $\mathscr{W}(\omega_{h_{\gamma}},\gamma\in C)$ of its $\mathfrak{T}$-cluster points is not empty, and each cluster
  point is a cylindrical measure on $V$, called a \emph{Wigner measure}. Conversely, for any cylindrical
  measure $M$ on $V$ there is at least one net $(\omega^{(M)}_{h_{\gamma}})_{\gamma\in C}$ of regular quantum
  states on $\bigl(\mathbf{W}_{h_{\gamma}}(V',\sigma)\bigr)_{\gamma\in C}$ that $\mathfrak{T}$-converges to it.
\end{proposition}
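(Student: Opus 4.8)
My plan is to move everything to the level of generating functionals and Fourier transforms, using the two bijections recalled just above: regular states on $\mathbf{W}_{h}(V',\sigma)$ correspond to functions on $V'$ that are continuous on every finite-dimensional $\Phi^{\circ}\subset V'$ and ``almost'' of positive type (carrying the twist $e^{ih\sigma}$), whereas cylindrical measures on $V$ correspond, via $\hat M$, to functions with the same continuity but of genuine positive type. Under this dictionary $\mathfrak{T}$ is nothing but simple convergence in $\mathds{C}^{V'}$, so both halves of the statement become assertions about pointwise limits of the functionals $G_{\omega_{h_\gamma}}(\xi)=\omega_{h_\gamma}\bigl(W(\xi)\bigr)$.

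For the direct part, uniform boundedness gives $\lvert G_{\omega_{h_\gamma}}(\xi)\rvert\le\mathcal{K}$ for every $\xi$, so the net lies in the product $\prod_{\xi\in V'}\{z\in\mathds{C}:\lvert z\rvert\le\mathcal{K}\}$, which is compact for simple convergence by Tychonoff; a convergent subnet then produces a pointwise limit $\Gamma$, already showing $\mathscr{W}(\omega_{h_\gamma},\gamma\in C)\neq\emptyset$. Positivity of type of $\Gamma$ is then free: passing to the limit in the finite ``almost positive type'' inequality and using $h_\gamma\to 0$, each twist $e^{ih_\gamma\sigma(\xi_i,\xi_j)}\to 1$ and one is left with $\sum_{i,j}\Gamma(\xi_i-\xi_j)\bar\lambda_j\lambda_i\ge 0$.

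I expect the genuine obstacle in the direct part to be the continuity of $\Gamma$ on each $\Phi^{\circ}$, which a bare pointwise limit need not preserve, and this is exactly where no loss of mass enters. Fixing $\Phi\in F(V)$, the restriction $\Gamma|_{\Phi^{\circ}}$ is a positive-type function on a finite-dimensional space; no loss of mass means precisely that the finite-dimensional measures associated to $\omega_{h_\gamma}$ on $V/\Phi\cong\Phi^{\circ}$ are tight, so by the Lévy continuity theorem $\Gamma|_{\Phi^{\circ}}$ is continuous at the origin, hence (being of positive type) continuous everywhere, and Bochner's theorem yields a finite Radon measure $\mu_{\Phi}$ with $\hat\mu_{\Phi}=\Gamma|_{\Phi^{\circ}}$; tightness also upgrades pointwise convergence of the characteristic functions to the ultraweak convergence of the $\mu_{\Phi}$ recorded by $\mathfrak{P}$. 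Consistency of $(\mu_{\Phi})_{\Phi\in F(V)}$ under the canonical projections is inherited from the nesting of the restrictions $\Gamma|_{\Phi^{\circ}}$, so $M=(\mu_{\Phi})_{\Phi\in F(V)}$ is a cylindrical measure with $\hat M=\Gamma$.

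For the converse, given $M$ with $\Gamma=\hat M$, I would build the approximating net by Gaussian (coherent-state) regularization. Choosing a quadratic form $q$ on $V'$ associated to a complex structure $J_\sigma$ compatible with $\sigma$, I set $G_{h_\gamma}(\xi)=\Gamma(\xi)\,e^{-\frac{h_\gamma}{4}q(\xi)}$. The factor $e^{-\frac{h_\gamma}{4}q}$ is the generating functional of the quasi-free vacuum, so the matrix $\bigl[e^{-\frac{h_\gamma}{4}q(\xi_i-\xi_j)}e^{ih_\gamma\sigma(\xi_i,\xi_j)}\bigr]$ is positive semidefinite; since $\bigl[\Gamma(\xi_i-\xi_j)\bigr]$ is positive semidefinite as well, the Schur product theorem makes $\bigl[G_{h_\gamma}(\xi_i-\xi_j)e^{ih_\gamma\sigma(\xi_i,\xi_j)}\bigr]$ positive semidefinite, i.e. $G_{h_\gamma}$ is almost of positive type and defines a regular state $\omega^{(M)}_{h_\gamma}$. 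As $h_\gamma\to 0$ the factor $e^{-\frac{h_\gamma}{4}q(\xi)}\to 1$ pointwise, whence $G_{h_\gamma}\to\Gamma$ simply, i.e. $\omega^{(M)}_{h_\gamma}\xrightarrow{\mathfrak{T}}M$. The delicate point here is to arrange $q$ continuously and compatibly across the whole directed family $F(V)$, dominating $\sigma$ so that the vacuum factor is a state, so that $G_{h_\gamma}$ is an honest generating functional on all of $V'$; once this is in place the convergence is immediate.
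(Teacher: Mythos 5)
You cannot be compared against the paper's own argument here, because the paper contains none: Proposition~\ref{prop:1} is imported verbatim from the cited reference on cylindrical Wigner measures, and this note only uses it. Measured against the standard argument of that source, your direct half is correct and is essentially that argument: Tychonoff compactness of $\prod_{\xi\in V'}\{\lvert z\rvert\le\mathcal{K}\}$ gives nonemptiness of the set of $\mathfrak{T}$-cluster points, the twist $e^{ih_{\gamma}\sigma(\xi_i,\xi_j)}\to 1$ converts ``almost positive type'' into genuine positive type, no loss of mass supplies the tightness needed for Lévy--Prokhorov and Bochner on each $\Phi^{\circ}$, and projective consistency of $(\mu_{\Phi})_{\Phi\in F(V)}$ is automatic from restriction of $\Gamma$. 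One bridge you should make explicit: $G_{\omega_{h_{\gamma}}}|_{\Phi^{\circ}}$ is itself \emph{not} a characteristic function (it is twisted-positive-definite, not positive-definite); the finite-dimensional measures whose tightness you invoke are the Husimi-type measures, whose characteristic functions are $G_{\omega_{h_{\gamma}}}(\xi)\,e^{-h_{\gamma}q_{\Phi}(\xi)/4}$, and only because the Gaussian factor tends to $1$ locally uniformly on $\Phi^{\circ}$ do their pointwise limits coincide with $\Gamma|_{\Phi^{\circ}}$, so that Lévy's theorem applies to $\Gamma|_{\Phi^{\circ}}$ rather than to some other function.

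The genuine gap is in the converse, and you have named it yourself without closing it. The Schur-product argument is exactly right, but it presupposes a \emph{single, globally defined} positive quadratic form $q$ on $V'$ such that $e^{-h q/4}$ is the generating functional of a regular (quasi-free) state on all of $\mathbf{W}_h(V',\sigma)$; equivalently, a positive symmetric bilinear form $g$ on $V'$ with $\sigma(\xi,\zeta)^2\le g(\xi,\xi)\,g(\zeta,\zeta)$ (up to normalization). Choosing compatible complex structures $J_{\sigma}$ on each finite-dimensional $\Phi^{\circ}$ -- which always exist -- does not suffice: those Gaussians do not glue into one function on $V'$, and the twisted positive-definiteness must hold simultaneously for arbitrary finite subsets of $V'$, i.e.\ you need at least one regular state on the full Weyl algebra to exist. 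This is harmless in every application appearing in this paper, where $\sigma=\Im\langle\cdot,\cdot\rangle$ of an inner product and $q(\xi)=\lVert\xi\rVert^2$ works, and it can be arranged whenever $V'$ has countable Hamel dimension (choose a basis and weight it to dominate $\sigma$); but for an arbitrary symplectic form on an arbitrary dual space this existence is precisely the nontrivial ingredient a complete proof of the converse must supply, either by a Zorn-type construction of a dominating form or as a standing hypothesis. As it stands, your converse is a correct reduction of the statement to that unproven ingredient, not a proof of it.
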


For any $\Phi\in F(V)$, we can define the topologies $\mathfrak{P}_{\Phi}$ and $\mathfrak{T}_{\Phi}$ on the restrictions
to $\mathbf{W}(\Phi^{\circ},\sigma)$ of regular quantum states on $\mathbf{W}(V',\sigma)$, together with the
Radon measures on $V/\Phi$. We then have the following characterization of semiclassical measures.

\begin{lemma}
  \label{lemma:1}
  \begin{equation*}
    \omega_{h_n}\bigr\rvert_{\mathbf{W}_{h_n}(\Phi^{\circ},\sigma)}\overset{h_{n}\to 0}{\longrightarrow}_{\mathfrak{P}_{\Phi}}\mu_{\Phi}
  \end{equation*}
  iff for any cylindrical symbol on $V$ with $f_{\Phi}\in C_0^{\infty}(V/\Phi)$
  \begin{equation*}
   \lim_{n\to \infty} \omega_{h_n}\bigl( \mathrm{Op}_{\frac{1}{2}}^{h_n}(f)\bigr)=\int_{V/\Phi}^{}f_{\Phi}(w)  \mathrm{d}\mu_{\Phi}(w)\; .
 \end{equation*}
 If in addition there is no loss of mass, then
 \begin{equation*}
   \omega_{h_n}\bigr\rvert_{\mathbf{W}_{h_n}(\Phi^{\circ},\sigma)}\overset{h_{n}\to 0}{\longrightarrow}_{\mathfrak{P}_{\Phi}\vee \mathfrak{T}_{\Phi}}\mu_{\Phi}\; .
 \end{equation*}
\end{lemma}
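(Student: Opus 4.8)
The plan is to transport the problem to finite dimensions and then to carry the classical Fourier-inversion formula \eqref{eq:11} to the quantum side through the Weyl quantization of Definition~\ref{def:3}. Fix $\Phi\in F(V)$, so that $\Phi^{\circ}\subset V'$ is a symplectic subspace of even real dimension $2n$ and $V/\Phi\cong\Phi^{\circ}$ via the identification introduced above. Restricting the states to the sub-C*-algebra $\mathbf{W}_{h_n}(\Phi^{\circ},\sigma)$ confines the entire problem to the finite-dimensional CCR algebra, whose regular representations are, by the Stone--von Neumann uniqueness theorem, unitarily equivalent to the Schrödinger representation on $L^2(\mathds{R}^n)$; under this identification cylindrical symbols with base $\Phi$ and their quantizations $\mathrm{Op}_{\frac{1}{2}}^{h_n}$ become ordinary semiclassical Weyl operators on the phase space $\mathds{R}^{2n}\cong V/\Phi$. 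Since only $\mathrm{Op}_{\frac{1}{2}}^{h_n}(f)\in\mathbf{W}_{h_n}(\Phi^{\circ},\sigma)$ is ever tested, the expectations depend solely on the restricted states, and the assertion is exactly the cylindrical transcription of the standard characterization of finite-dimensional Wigner measures.

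The bridge between the two sides is the identity obtained by applying the (bounded, linear) state $\omega_{h_n}$ to the Bochner integral defining $\mathrm{Op}_{\frac{1}{2}}^{h_n}(f)$ in Definition~\ref{def:3}:
\[
  \omega_{h_n}\bigl(\mathrm{Op}_{\frac{1}{2}}^{h_n}(f)\bigr)=\int_{\Phi^{\circ}}\hat{f}_{\Phi}(\xi)\,G_{\omega_{h_n}}(2\pi\xi)\,\mathrm{d}L_{\Phi^{\circ}}(\xi)\; ,
\]
which is the exact quantum counterpart of \eqref{eq:11}, the characteristic function $\hat{\mu}_{\Phi}(2\pi\xi)$ being replaced by the generating functional $G_{\omega_{h_n}}(2\pi\xi)$. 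For symbols with $f_{\Phi}\in C_0^{\infty}(V/\Phi)$ one has $\hat{f}_{\Phi}\in\mathscr{S}(\Phi^{\circ})\subset L^1(\Phi^{\circ})$, while $\lvert G_{\omega_{h_n}}\rvert\le 1$ since the $\omega_{h_n}$ are states; this is the uniform domination I would use to interchange limit and integral. Reading the identity as the value at $f_{\Phi}$ of a linear functional attached to $\omega_{h_n}$, the right-hand condition of the Lemma is precisely the convergence of these functionals on the dense class $C_0^{\infty}(V/\Phi)$, whereas $\mathfrak{P}_{\Phi}$-convergence is their convergence on all of $C_0(V/\Phi)$ to integration against $\mu_{\Phi}$. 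For $(\Leftarrow)$ I would extend the convergence from $C_0^{\infty}$ to $C_0$ by density and identify the limiting positive functional with $\mu_{\Phi}$; the $(\Rightarrow)$ direction runs the same identity in reverse.

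For the final assertion I would observe that $\mathfrak{P}_{\Phi}$-convergence, being ultraweak (vague), controls the measures only on compacta and is a priori compatible with mass escaping to infinity. Under the no-loss-of-mass hypothesis the total masses converge to $\mu_{\Phi}(V/\Phi)$, so the family of finite-dimensional measures is tight; by a Prokhorov argument vague convergence together with tightness upgrades to weak convergence, which is equivalent to the pointwise limit $G_{\omega_{h_n}}(\xi)\to\hat{\mu}_{\Phi}(\xi)$ for every $\xi\in\Phi^{\circ}$, that is, to $\mathfrak{T}_{\Phi}$-convergence. Combined with what was already proved, this yields convergence in the join $\mathfrak{P}_{\Phi}\vee\mathfrak{T}_{\Phi}$.

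The hard part will be the two analytic facts packed into the second paragraph, both stemming from the fact that at fixed $h_n$ the Weyl symbol of a positive operator need not be positive, so the object paired with $f_{\Phi}$ is only a quasi-measure. First, to identify the limit on $C_0^{\infty}(V/\Phi)$ with integration against a genuine finite positive Radon measure, and then to extend the convergence to all of $C_0(V/\Phi)$, I will need the asymptotic positivity of the Weyl calculus on states, i.e. a sharp Gårding inequality in the semiclassical limit, together with a total-mass bound uniform in $n$ coming from the uniform boundedness of the net and the Calderón--Vaillancourt boundedness of $\mathrm{Op}_{\frac{1}{2}}^{h_n}$ (available because $\hat{f}_{\Phi}\in L^1(\Phi^{\circ})$); the Riesz--Markov theorem then furnishes $\mu_{\Phi}$. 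Everything else — the Fourier bridge, the density of $C_0^{\infty}$ in $C_0$, and the tightness step — becomes routine once the problem has been transported to the finite-dimensional representation of $\mathbf{W}(\Phi^{\circ},\sigma)$.
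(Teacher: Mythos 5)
The paper states Lemma~\ref{lemma:1} \emph{without proof} -- it is presented as the cylindrical transcription of standard finite-dimensional semiclassical facts -- so your proposal must be judged against that standard argument, and in outline you have reconstructed it correctly: the Stone--von Neumann reduction, the Bochner-integral identity $\omega_{h_n}\bigl(\mathrm{Op}_{\frac{1}{2}}^{h_n}(f)\bigr)=\int_{\Phi^{\circ}}\hat{f}_{\Phi}(\xi)\,G_{\omega_{h_n}}(2\pi\xi)\,\mathrm{d}L_{\Phi^{\circ}}(\xi)$, and the density-plus-positivity extension from $C_0^{\infty}$ to $C_0$ are the right ingredients for the first equivalence. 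Three repairable quibbles there. First, the dominated-convergence step you set up presupposes pointwise convergence of $G_{\omega_{h_n}}$ on $\Phi^{\circ}$, i.e.\ $\mathfrak{T}_{\Phi}$-convergence, which is \emph{not} a hypothesis of the first part, so it cannot be used for that equivalence. Second, since $\mu_{\Phi}$ appears on both sides of the ``iff'', Riesz--Markov is not needed; what is needed is only asymptotic equicontinuity in the sup norm. Third, neither Calder\'on--Vaillancourt nor the $L^1$-Fourier bound provides that: they control $\lVert \mathrm{Op}_{\frac{1}{2}}^{h}(u)\rVert$ by derivatives of $u$ or by $\lVert \hat{u}\rVert_{L^1}$, not by $\lVert u\rVert_{\infty}$; the bound $\limsup_{n}\lVert \mathrm{Op}_{\frac{1}{2}}^{h_n}(u)\rVert\leq\lVert u\rVert_{\infty}+o(1)$ comes from sharp G\aa rding, which you do have on the table.

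The genuine gap is in the final step. You apply Prokhorov (``vague $+$ tight $\Rightarrow$ weak'') and L\'evy continuity to ``the family of finite-dimensional measures'' attached to $\omega_{h_n}$; but, as you yourself observe earlier, at fixed $h_n$ these objects are only \emph{quasi-measures} (the Wigner distribution of a state is not positive), and for non-positive measures the implication ``vague convergence $+$ convergence of total masses $\Rightarrow$ weak convergence'' is false: $\nu_n=\delta_0+\delta_n-\delta_{n+1}$ converges vaguely to $\delta_0$ with constant total mass $1$, yet neither $\int\cos(\pi x)\,\mathrm{d}\nu_n$ nor the characteristic functions converge. Positivity is available only at the operator level, so the upgrade to $\mathfrak{T}_{\Phi}$ must be run there. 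Concretely, with $\chi_R\in C_0^{\infty}(V/\Phi)$ equal to $1$ on a ball of radius $R$, split $G_{\omega_{h_n}}(\xi)=\omega_{h_n}\bigl(W_{h_n}(\xi)\,\mathrm{Op}_{\frac{1}{2}}^{h_n}(\chi_R)\bigr)+\omega_{h_n}\bigl(W_{h_n}(\xi)\,\mathrm{Op}_{\frac{1}{2}}^{h_n}(1-\chi_R)\bigr)$. The first term converges to $\int_{V/\Phi}e^{i\xi(w)}\chi_R(w)\,\mathrm{d}\mu_{\Phi}(w)$ by the composition calculus ($W_{h_n}(\xi)\,\mathrm{Op}_{\frac{1}{2}}^{h_n}(\chi_R)=\mathrm{Op}_{\frac{1}{2}}^{h_n}\bigl(e^{i\xi(\cdot)}\chi_R\bigr)+O(h_n)$) together with the already-proved first part. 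The second term is controlled by Cauchy--Schwarz for the state, $\bigl\lvert\omega_{h_n}\bigl(W_{h_n}(\xi)\,\mathrm{Op}_{\frac{1}{2}}^{h_n}(1-\chi_R)\bigr)\bigr\rvert^2\leq\omega_{h_n}(1)\,\omega_{h_n}\bigl(\mathrm{Op}_{\frac{1}{2}}^{h_n}(1-\chi_R)^2\bigr)$, the calculus $\mathrm{Op}_{\frac{1}{2}}^{h_n}(1-\chi_R)^2=\mathrm{Op}_{\frac{1}{2}}^{h_n}\bigl((1-\chi_R)^2\bigr)+O(h_n)$, and -- this is exactly where the no-loss-of-mass hypothesis enters and is otherwise unused -- the identity $\omega_{h_n}\bigl(\mathrm{Op}_{\frac{1}{2}}^{h_n}((1-\chi_R)^2)\bigr)=\omega_{h_n}(1)-\omega_{h_n}\bigl(\mathrm{Op}_{\frac{1}{2}}^{h_n}(2\chi_R-\chi_R^2)\bigr)\to\mu_{\Phi}(V/\Phi)-\int(2\chi_R-\chi_R^2)\,\mathrm{d}\mu_{\Phi}$, which is small for $R$ large. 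This quantum tightness argument (it is Ammari--Nier's, which the paper cites) is what replaces Prokhorov; without it, the step you call routine does not go through.
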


On any regular representation $(H_{\omega},\pi_{\omega},\Omega_{\omega})$, it is possible to extend the only if part of
Lemma~\ref{lemma:1} to polynomially bounded symbols. Let us denote by $\mathfrak{S}^p_+(H)$,
$1\leq p\leq \infty$, the cone of positive continuous operators on the Hilbert space $H$ that belong to the
Schatten ideal of order $p$; and by $S_{\Phi}(\langle w \rangle_{\Phi}^s,g_{\Phi})$, $s\in \mathds{R}$, the
Hörmander class of symbols on $V/\Phi$, with the metric $g_{\Phi}$ to be either
$\lvert \mathrm{d}w \rvert_{\Phi}^2$ or $\frac{\lvert \mathrm{d}w \rvert_{\Phi}^2}{\langle w \rangle_{\Phi}^2}$.
\begin{proposition}
  \label{prop:2}
  Let $\Phi^{\circ}\subset V'$ be a symplectic subspace. In addition, let $(\omega_{h_{\gamma}})_{\gamma\in C}$ be a net of regular states on
  $\bigl(\mathbf{W}_{h_{\gamma}}(V',\sigma)\bigr)_{\gamma\in C}$, and
  $\varrho_{h_{\gamma}}\in \mathfrak{S}^1_+(H_{\omega})$ for any $\gamma\in C$, such that for some
  $\delta\in \mathds{R^+}$
  \begin{equation*}
    \Tr\bigl(\langle N_{\omega,\Phi}  \rangle_{}^{\delta}\varrho_{h_{\gamma}} \bigr)_{H_{\omega}}\leq C_{\delta}<\infty\; .
  \end{equation*}
  Then
  \begin{equation*}
    \varrho_{h_{\gamma}}\to_{\mathfrak{T}} \mu=(\mu_{\Psi})_{\Psi\in F(V)}
  \end{equation*}
  implies that for any cylindrical $f$ on $V$ with base $\Phi$ and $f_{\Phi}\in S_{\Phi}(\langle w \rangle_{}^\delta,g_{\Phi})$,
  \begin{equation*}
    \lim_{\gamma\in C}\Tr\bigl(\pi_{\omega}\bigl(\mathrm{Op}_{\frac{1}{2}}^{h_{\gamma}}(f)\bigr) \varrho_{h_{\gamma}}  \bigr)=\int_{V/\Phi}^{}f_{\Phi}(w)  \mathrm{d}\mu_{\Phi}(w)\; .
  \end{equation*}
\end{proposition}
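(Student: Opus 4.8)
The plan is to reduce the whole statement to standard finite-dimensional semiclassical analysis on the $2n$-dimensional symplectic space $\Phi^{\circ}$, and then to upgrade the convergence from compactly supported symbols (which is exactly the content of Lemma~\ref{lemma:1}) to symbols of polynomial growth by a cutoff-plus-tail argument, controlling the tail uniformly in $\gamma$ through the moment hypothesis. Since $f$ has base $\Phi$ and $\Phi^{\circ}$ is finite-dimensional, the Weyl operators $\pi_{\omega}\bigl(W_h(\Phi^{\circ})\bigr)$, the creation and annihilation operators $a^{\#}_{\omega}(\Phi^{\circ})$, the number operator $N_{\omega,\Phi}$ and the scale $(H^{\delta}_{\omega,\Phi})_{\delta}$ are unitarily equivalent to their counterparts on the Fock representation of $\mathbf{W}(\Phi^{\circ},\sigma)$; in particular $\pi_{\omega}\bigl(\mathrm{Op}_{\frac12}^{h}(f)\bigr)$ is the finite-dimensional semiclassical Weyl quantization of $f_{\Phi}$, and I may invoke the finite-dimensional symbolic calculus, the Calderón--Vaillancourt theorem, and the Helffer--Sjöstrand functional calculus with constants uniform for $h\in(0,1)$. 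I fix $\chi\in C_0^{\infty}(V/\Phi)$ with $\chi\equiv 1$ near the origin, put $\chi_R(w)=\chi(w/R)$, and decompose $f_{\Phi}=\chi_R f_{\Phi}+(1-\chi_R)f_{\Phi}=:f^R_{\Phi}+a_R$.

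For the main term I note that $f^R_{\Phi}\in C_0^{\infty}(V/\Phi)$, so applying Lemma~\ref{lemma:1} to the states $\Tr\bigl(\pi_{\omega}(\,\cdot\,)\varrho_{h_{\gamma}}\bigr)$ --- whose $\mathfrak{T}$-convergence to $\mu$ is precisely the hypothesis --- gives, for each fixed $R$, $\lim_{\gamma}\Tr\bigl(\pi_{\omega}(\mathrm{Op}_{\frac12}^{h_{\gamma}}(f^R))\varrho_{h_{\gamma}}\bigr)=\int_{V/\Phi}f^R_{\Phi}\,\mathrm{d}\mu_{\Phi}$. Before letting $R\to\infty$ I record that $\mu_{\Phi}$ has finite $\delta$-moment: applying the same convergence to the nonnegative symbol $\chi_R\langle w\rangle^{\delta}$ and using that its Weyl quantization is dominated, after conjugation by $\langle N_{\omega,\Phi}\rangle^{-\delta/4}$, by $\langle N_{\omega,\Phi}\rangle^{\delta/2}$ (whose symbol is comparable to $\langle w\rangle^{\delta}$ uniformly in $h$) yields $\int\chi_R\langle w\rangle^{\delta}\,\mathrm{d}\mu_{\Phi}\le C\,C_{\delta}$ uniformly in $R$, so monotone convergence gives $\int\langle w\rangle^{\delta}\,\mathrm{d}\mu_{\Phi}\le C\,C_{\delta}<\infty$. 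Since $\lvert f_{\Phi}\rvert\le C\langle w\rangle^{\delta}\in L^1(\mu_{\Phi})$, dominated convergence then gives $\int f^R_{\Phi}\,\mathrm{d}\mu_{\Phi}\to\int f_{\Phi}\,\mathrm{d}\mu_{\Phi}$ as $R\to\infty$.

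The crux is the uniform tail bound. The symbol $a_R$ is supported in $\{\langle w\rangle\ge R\}$, where $\lvert a_R\rvert\le C\langle w\rangle^{\delta}\le C R^{-\delta}\langle w\rangle^{2\delta}$; since differentiating the cutoff only improves the decay, all seminorms of $a_R$ in the Hörmander class $S_{\Phi}(\langle w\rangle^{2\delta},g_{\Phi})$ are $O(R^{-\delta})$. Because the Weyl symbol of $\langle N_{\omega,\Phi}\rangle^{\delta/2}$ is comparable to $\langle w\rangle^{\delta}$ uniformly in $h\in(0,1)$, the symbolic calculus shows that $K_R:=\langle N_{\omega,\Phi}\rangle^{-\delta/2}\pi_{\omega}(\mathrm{Op}_{\frac12}^{h_{\gamma}}(a_R))\langle N_{\omega,\Phi}\rangle^{-\delta/2}$ has an order-zero symbol, whence $\lVert K_R\rVert_{\mathscr{B}(H_{\omega})}\le C R^{-\delta}$ by Calderón--Vaillancourt, uniformly in $\gamma$. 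Writing $\pi_{\omega}(\mathrm{Op}_{\frac12}^{h_{\gamma}}(a_R))=\langle N_{\omega,\Phi}\rangle^{\delta/2}K_R\langle N_{\omega,\Phi}\rangle^{\delta/2}$ and using cyclicity of the trace together with $\varrho_{h_{\gamma}}\ge 0$,
\[
  \bigl\lvert\Tr\bigl(\pi_{\omega}(\mathrm{Op}_{\frac12}^{h_{\gamma}}(a_R))\varrho_{h_{\gamma}}\bigr)\bigr\rvert\le \lVert K_R\rVert\,\Tr\bigl(\langle N_{\omega,\Phi}\rangle^{\delta}\varrho_{h_{\gamma}}\bigr)\le C\,C_{\delta}\,R^{-\delta},
\]
uniformly in $\gamma$. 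Combining the three facts, for every $R$ one has $\limsup_{\gamma}\bigl\lvert\Tr(\pi_{\omega}(\mathrm{Op}_{\frac12}^{h_{\gamma}}(f))\varrho_{h_{\gamma}})-\int f_{\Phi}\,\mathrm{d}\mu_{\Phi}\bigr\rvert\le\bigl\lvert\int a_R\,\mathrm{d}\mu_{\Phi}\bigr\rvert+C\,C_{\delta}R^{-\delta}$, and letting $R\to\infty$ the right-hand side vanishes, which is the claim.

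I expect the genuine difficulty to lie entirely in the uniform-in-$h$ pseudodifferential estimate used above: one must identify the Weyl symbol of $\langle N_{\omega,\Phi}\rangle^{s}$ and control its behaviour under composition as $h\to 0$ (through Helffer--Sjöstrand and the Moyal product, keeping track of the zero-point energy shift of order $h$), and then verify that the resulting constants do not degenerate in the semiclassical limit. By contrast, the remaining domain subtleties --- that $\mathrm{Op}_{\frac12}^{h}(f)$ is a priori only a continuous map $\mathscr{S}_{\omega,\Phi}\to\mathscr{S}'_{\omega,\Phi}$, and that the trace is finite at all --- are resolved precisely by the moment hypothesis, which makes $\langle N_{\omega,\Phi}\rangle^{\delta/2}\varrho_{h_{\gamma}}\langle N_{\omega,\Phi}\rangle^{\delta/2}$ trace class.
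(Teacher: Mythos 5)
The paper never actually proves Proposition~\ref{prop:2}: it is stated as the ``standard'' extension of Lemma~\ref{lemma:1} to polynomially bounded symbols, with the underlying finite-dimensional estimates only surfacing later, inside the proof of Theorem~\ref{thm:1} (full ellipticity of $\mathbf{Op}^{h}_{\frac{1}{2}}(1+q)$, and conjugation bounds such as $\bigl\lVert \mathbf{Op}^{h}_{\frac{1}{2}}\bigl((1+q)^{-\delta/2}\bigr)^{-2}\bigl(1+\tfrac{hR}{2}+\mathbf{N}_{\omega_h,R}\bigr)^{\delta}\bigr\rVert \leq 1+K'h$). Your argument is precisely the route the paper implicitly relies on, and it is correct: reduction to the $2n$-dimensional Weyl calculus on $\Phi^{\circ}$, cutoff decomposition $f_{\Phi}=\chi_R f_{\Phi}+a_R$, Lemma~\ref{lemma:1} for the compactly supported piece, a moment bound $\int \langle w\rangle^{\delta}\,\mathrm{d}\mu_{\Phi}<\infty$ by monotone convergence, and a uniform-in-$\gamma$ tail estimate via conjugation by $\langle N_{\omega,\Phi}\rangle^{\pm\delta/2}$ plus Calder\'on--Vaillancourt; your weight bookkeeping is right, since $N_{\omega,\Phi}$ quantizes $q(w)$ up to an $O(h)$ shift, so $\langle N_{\omega,\Phi}\rangle^{\delta/2}$ indeed has symbol comparable to $\langle w\rangle^{\delta}$ uniformly in $h$. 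One point you gloss over and should state explicitly: Lemma~\ref{lemma:1} takes $\mathfrak{P}_{\Phi}$-convergence as its hypothesis, whereas Proposition~\ref{prop:2} assumes only $\mathfrak{T}$-convergence (convergence of generating functionals); the bridge is the no-loss-of-mass property, which your moment hypothesis supplies (a quantum Chebyshev/tightness bound, exactly as the paper remarks before Proposition~\ref{prop:1} in connection with Hypotheses~\ref{hyp:2} and~\ref{hyp:4}). Adding that one line closes the argument.
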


\section{Concentration of Wigner measures}
\label{sec:conc-wign-meas}

As explained in Section~\ref{sec:cylindr-quant-regul}, to any bounded sequence
$(\omega_{h_{\gamma}})_{\gamma\in C}$, $h_{\gamma}\to 0$, of regular states on
$\bigl(\mathbf{W}_{h_{\gamma}}(V',\sigma)\bigr)_{\gamma\in C}$ there corresponds a nonempty set of cylindrical
measures on $V$, called Wigner measures. In applications, it is desirable to have sufficient conditions on
$(\omega_{h_{\gamma}})_{\gamma\in C}$ such that each corresponding Wigner measure is concentrated as a true
Radon measure on some given topological vector space $W$ (that may or may not differ from $V$). The main
purpose of this note is to provide such sufficient conditions, exploiting cylindrical semiclassical
analysis. Any cylindrical measure $M=(\mu_{\Phi})_{\Phi\in F(V)}$ on a locally convex space $V$ is a Borel Radon measure on
the space
\begin{equation*}
  \overline{V}=  \prod_{\Phi\in F(V)}\overline{V/\Phi}\; ,
\end{equation*}
product of \v{C}ech compactifications of the finite dimensional quotients, endowed with the product
topology. The cylindrical measure $M\in \mathcal{M}_{\mathrm{rad}}(\overline{V})$ concentrates as the Borel
Radon measure $\mu\in \mathcal{M}_{\mathrm{rad}}(X)$ on the real topological vector space $X$ iff there exists an
injection $\varphi:X\to \overline{V}$,
\begin{equation*}
  M\bigl(\overline{V}\setminus \varphi X\bigr)=0\; ,
\end{equation*}
and $M(b)=\mu(\varphi^{-1}b)$ for any Borel set $b\in \mathscr{B}(\overline{V})\cap \varphi X$. If $X$ is a complex topological
vector space, the injection is $\varphi:\mathbb{F}X\to \overline{V}$, where $\mathbb{F}$ is the forgetful
functor from the category of complex vector spaces to the category of real vector spaces. The rest of the
definition is modified accordingly.

\subsection{Separable Hilbert spaces}
\label{sec:separ-hilb-spac}

Let $X$ be a \emph{complex} vector space, and
\begin{equation*}
  t: X\times X\to \mathds{C}
\end{equation*}
a sesquilinear form on $X$. Let
\begin{equation*}
  q(x)=t(x,x)
\end{equation*}
be the corresponding quadratic form. Then $t$ factors through $X/\ker q$, \emph{i.e.} there is a sesquilinear
form $t_q:(X/\ker q) \times (X/\ker q) \to \mathds{C}$ such that
\begin{equation*}
  t_q([x]_q,[y]_q)=t(x,y)\; .
\end{equation*}
With an abuse of notation, let us still denote $t_q$ by $t$. The completion $X_q$ of $X/\ker q$ with respect
to the norm $\sqrt{q} $ is a complex Hilbert space with inner product $t$. Throughout this section, we suppose
that $X_q$ is \emph{separable}. The results of this section can be easily adapted to real vector spaces.

Let us denote by $\mathbb{F}$ the forgetful functor from the category of complex vector spaces to the category of real
vector spaces. Therefore $\mathbb{F}X_q$ is a real Hilbert space with inner product $\Re \, t$, and a symplectic
space with symplectic form $\Im \, t$.
In addition, if $\{x_1,\dotsc, x_n,\dotsc\}$ is a complex orthonormal basis of the space $X_q$, then
$\{\mathbb{F} x_1,\mathbb{F} (i x_1),\dotsc,\mathbb{F} x_n,\mathbb{F}(ix_n),\dotsc\}$ is a real orthonormal
basis of the space $\mathbb{F} X_q$. We make the following assumption of compatibility between the inner
product of $X$ and the symplectic form $\sigma$ of $V'$, dual to a locally convex space $V$ as in
Section~\ref{sec:cylindr-quant-regul}.
\begin{hyp}
  \label{hyp:1}
  $\mathbb{F}X\subseteq V'$. In addition:
  \begin{itemize}
  \item The complex structure $J:\mathbb{F}X\to \mathbb{F}X$, defined by $J\mathbb{F}x=\mathbb{F}(ix)$, is such that
    \begin{equation*}
      \sigma(J\,\cdot\,,J\,\cdot \, )=\sigma(\,\cdot \,,\,\cdot \,)\; .
    \end{equation*}
  \item There exists a linear map $\mathbf{m}:X\to X$ such that for any $x,y\in X$,
    \begin{equation*}
      t(x,y)=\sigma\bigl(\mathbb{F}\mathbf{m}(x), \mathbb{F}\mathbf{m}(iy)\bigr)+i\sigma\bigl(\mathbb{F}\mathbf{m}(x),\mathbb{F}\mathbf{m}(y)\bigr)\; .
  \end{equation*}
  \end{itemize}
\end{hyp}


Now let
\begin{equation*}
  \{[e_n]_q,n\in \mathds{N}\}\subset X/\ker q
\end{equation*}
be an orthonormal basis of $X_q$. In addition, let
\begin{equation*}
  \bigl\{\varepsilon_n=\mathbb{F}\mathbf{m}(e_n); n\in \mathds{N}\bigr\}\subset V'\; .
\end{equation*}
Let us consider any finite-dimensional subspace of $X$ of the form
\begin{equation*}
  E_R=\mathrm{span}_{\mathds{C}}\{e_0,e_1,\dotsc,e_R\}\; ,\; R\in \mathds{N}\, .
\end{equation*}
The complex Hilbert space $E_R$ is isomorphic to $p_R X_q$, where
\begin{equation*}
  p_R=\sum_{j=1}^Rp([e_j]_q)
\end{equation*}
is the sum of rank one orthogonal projections. We define the symplectic subspace $\Phi^{\circ}_{E_R}\subset V'$ as
\begin{equation*}
  \Phi^{\circ}_{E_R} =\mathrm{span}_{\mathds{R}}\{\varepsilon_0,J\varepsilon_0,\dotsc,\varepsilon_R,J\varepsilon_R\}\; .
\end{equation*}
As in Section~\ref{sec:cylindr-quant-regul}, we remark that $\Phi^{\circ}_{E_R}$ can be seen as a complex inner
product space with inner product
\begin{equation*}
  b(\,\cdot \,,\,\cdot \,)=\sigma(\,\cdot \,,J\,\cdot \,)+i\sigma(\,\cdot \,,\,\cdot \,)\; .
\end{equation*}
By construction,
\begin{equation*}
  (\Phi^{\circ}_{E_R},b) \overset{i_t}{\cong}(E_R,t)\; ,
\end{equation*}
as complex Hilbert spaces. Now let $(H_{\omega},\pi_{\omega},\Omega_{\omega})$ be the GNS representation of
$\mathbf{W}(V',\sigma)$ corresponding to a regular state $\omega$. We define the $\mathbf{m}$-creation and
annihilation operators as follows. For any $x\in X$,
\begin{equation}
  \label{eq:18}
  \begin{aligned}
    \mathbf{a}_{\omega}^{*}(x)&=\frac{1}{2}\Bigl(\varphi_{\omega}\bigl(\mathbb{F}\mathbf{m}(x)\bigr)-i\varphi_{\omega}\bigl(\mathbb{F} \mathbf{m}(ix)\bigr)\Bigr)\; ,\\
    \mathbf{a}_{\omega}(x)&=\frac{1}{2}\Bigl(\varphi_{\omega}\bigl(\mathbb{F}\mathbf{m}(x)\bigr)+i\varphi_{\omega}\bigl(\mathbb{F} \mathbf{m}(ix)\bigr)\Bigr)\; .
  \end{aligned}
\end{equation}
The corresponding $\mathbf{m}$-number operator on $\Phi^{\circ}_{E_R}$ is the generalization of the number operator to
$\mathbf{m}$-creation and annihilation:
\begin{equation}
  \label{eq:17}
  \mathbf{N}_{\omega,R}=\sum_{j=1}^R\mathbf{a}^{*}_{\omega}(\mathbf{m}(e_j))\mathbf{a}_{\omega}(\mathbf{m}(e_j))\; .
\end{equation}
For any $\delta>0$, the sequence $(\mathbf{N}_{\omega,R}^{\delta})_{R\in \mathds{N}}$ is a monotonically increasing sequence of
positive self adjoint operators. If we denote by $(\mathbf{n}_{\omega,R}^{\delta})_{R\in \mathds{N}}$ the
sequence of corresponding quadratic forms, then
\begin{equation*}
\mathbf{n}_{\omega,\infty}^{\delta} =\sup_{R\in \mathds{N}} \mathbf{n}_{\omega,R}^{\delta}
\end{equation*}
is a closed positive quadratic form on $H_{\omega}$ (not necessarily densely defined), with domain of definition
\begin{equation*}
  D\bigl(\mathbf{n}_{\omega,\infty}^{\delta}\bigr)=\Bigl\{\psi\in \bigcap_{R\in \mathds{N}}D\bigl(\mathbf{n}_{\omega,R}^{\delta}\bigr), \sup_{R\in \mathds{N}}\mathbf{n}_{\omega,R}^{\delta}(\psi)<\infty\Bigr\}\; .
\end{equation*}
Now let $(\psi_{h_{\gamma}})_{\gamma\in C}$, $h_{\gamma}\to 0$, be a net of vectors in $H_{\omega_{h_{\gamma}}}$ for any
$\gamma\in C$, and denote by $\mathscr{W}(\psi_{h_{\gamma}},\gamma\in C)$ the set of Wigner cylindrical measures
on $V$ associated to it -- any vector in $H_{\omega}$ has an associated rank-one orthogonal projection that is
a regular state on $\mathbf{W}(V',\sigma)$. The aim is to prove that the following condition is sufficient to
prove that any $M\in \mathscr{W}(\psi_{h_{\gamma}},\gamma\in C)$ concentrates as a (finite) Borel Radon measure
$\mu$ on the Hilbert space $X_q$.
\begin{hyp}
  \label{hyp:2}
  Let $(\psi_{h_{\gamma}})_{\gamma\in C}$ be a net of vectors on regular GNS representations of
  $\mathbf{W}_{h_{\gamma}}(V',\sigma)$, with
  \begin{equation*}
    \sup_{\gamma\in C}\lVert \psi_{h_{\gamma}}  \rVert_{H_{\omega_{h_{\gamma}}}}^{}<\infty\; .
  \end{equation*}
  There exists $\delta>0$ such that $\psi_{h_{\gamma}}\in D\bigl(\mathbf{n}_{\omega_{h_{\gamma}},\infty}^{\delta}\bigr)$ for any
  $\gamma\in C$, and there exists $\mathcal{K}>0$ such that uniformly in $\gamma\in C$,
  \begin{equation*}
    \mathbf{n}_{\omega,\infty}^{\delta}(\psi_{h_{\gamma}})\leq \mathcal{K}\; .
  \end{equation*}
\end{hyp}

First of all, any $M\in \mathscr{W}(\psi_{h_{\gamma}},\gamma\in C)$ is a cylindrical measure on $V$. Therefore
$M=(\mu_{\Phi})_{\Phi\in F(V)}$. Now let us consider
$(\mu_{\Phi_R})_{R\in \mathds{N}}\subset (\mu_{\Phi})_{\Phi\in F(V)}$, where
$\Phi_R=\Phi_{E_R}^{ \circ \circ}$ is the polar of $\Phi_{E_R}^{\circ}$. $\Phi_R$ belongs to $F(V)$, and satisfies
\begin{equation*}
  \Phi_R^{\circ}=\Phi_{E_R}^{\circ \circ \circ}= \Phi_{E_R}^{\circ}\; ,
\end{equation*}
via the bipolar theorem \citep{MR633754
}. Therefore by means of the isomorphisms (see Section~\ref{sec:cylindr-quant-regul})
\begin{equation*}
  V/\Phi_R\overset{\imath_{\sigma}}{\cong} (V/\Phi_R)'\overset{^{\mathrm{t}}p_{\Phi_R}}{\cong}\Phi_R^{\circ}\overset{i_t}{\cong} E_R\; ,
\end{equation*}
it is possible to identify each $V/\Phi_R$ with $E_R$, \emph{i.e.} with a complex finite dimensional Hilbert
subspace of $X_q$. Using the aforementioned identification, it follows that
\begin{equation*}
  \forall R\in \mathds{N},\; V/\Phi_R\subset V/\Phi_{R+1}\; ;\quad \bigcup_{R\in \mathds{N}}V/\Phi_R=X_q\; .
\end{equation*}
Finally, for any $j\in \mathds{N}$ the orthogonal projections
\begin{equation*}
  p_{R+j,R}:E_{R+j}\to E_R
\end{equation*}
coincide with the projections
\begin{equation*}
  p_{\Phi_{R+j},\Phi_R}:V/\Phi_{R+j}\to V/\Phi_R
\end{equation*}
relative to the projective family $M=(\mu_{\Phi})_{\Phi\in F(V)}$. Therefore
$(\mu_{\Phi_R})_{R\in \mathds{N}}$ is a projective subfamily of $M$. As proved in
\citep{MR0466482
}, $M$ is concentrated as a Borel Radon measure $\mu$ on $X_q$ iff
\begin{equation}
  \label{eq:9}
\forall R\in \mathds{N},\quad  \lim_{r\to \infty}\mu_{\Phi_R}\bigl(\{f\in V/\Phi_R\,,\, q(f)\geq r\}\bigr)=0\; ,
\end{equation}
where $q$ is the Euclidean norm on $V/\Phi_R$ (seen as a complex space), that coincides with the Hilbert norm of
$X_q$. In fact, if we denote by
\begin{equation*}
  \mu_{E_R}= i_t \,_{*}\, ^{\mathrm{t}}p_{\Phi_R}\, _{*}\, \imath_{\sigma}\, _{*}\, \mu_{\Phi_R}
\end{equation*}
the pushforward image of $\mu_{\Phi_R}$, \eqref{eq:9} is equivalent to
\begin{equation}
  \label{eq:22}
  \lim_{r\to \infty}\mu_{E_R}\bigl(\{f\in E_M\,,\, q(f)\geq r\}\bigr)=0\; .
\end{equation}
Suppose now that we have proved that $M$ concentrates as a Radon measure $\mu$ on $X_q$. In addition, let the
following condition be satisfied:
\begin{equation}
  \label{eq:10}
\exists \delta>0\,,\,\exists \mathcal{K}>0\,,\quad  \int_{X_q}^{}q(f)^{\delta}  \mathrm{d}\mu(f)\leq \mathcal{K}\; .
\end{equation}
Equation \eqref{eq:10} implies that $\hat{\mu}:X_q\to \mathds{C}$ is continuous. Let $0<\delta\leq 1$, and let
$g_1,g_2\in X_q$. Then for some $\mathcal{K}_{\delta}>0$
\begin{equation*}
  \begin{split}
    \bigl\lvert \hat{\mu}(g_1)-\hat{\mu}(g_2)  \bigr\rvert_{}^{}\leq \mspace{-5mu}\int_{X_q}^{}\mspace{-7mu}\bigl\lvert e^{i\, \Re \, t(g_1-g_2,f)}-1  \bigl\rvert_{}^{}  \mathrm{d}\mu(f)\leq \mathcal{K}_{\delta}\mspace{-5mu}\int_{X_q}^{}\mspace{-7mu}\bigl\lvert t(g_1-g_2,f)  \bigr\rvert_{}^{\delta}  \mathrm{d}\mu(f)\\\leq \mathcal{K}_{\delta}q(g_1-g_2)^{\delta}\int_{X_q}^{}q(f)^{\delta}  \mathrm{d}\mu(f)\leq \mathcal{K}_{\delta}\mathcal{K}\, q(g_1-g_2)^{\delta}\; .
  \end{split}
\end{equation*}
If $\delta>1$, then
\begin{equation*}
  \begin{split}
    \int_{X_q}^{}\mspace{-7mu}q(f)  \mathrm{d}\mu(f)= \mspace{-5mu}\int_{X_q}^{}\mspace{-11mu}1_{\{q(\cdot )\leq 1\}}(f)q(f)  \mathrm{d}\mu(f)+\mspace{-5mu}\int_{X_q}^{}\mspace{-11mu}1_{\{q(\cdot )>1\}}(f)q(f)  \mathrm{d}\mu(f)\\
    \leq \mu(X_q)+\mathcal{K}\; ,
  \end{split}
\end{equation*}
and therefore the result above for $\delta=1$ applies with constant $\mu(X_q)+\mathcal{K}$.
\begin{thm}
  \label{thm:1}
  Let $V$ be a locally convex space with dual $(V',\sigma)$ endowed with a symplectic form. In addition, let
  $X$ be a complex vector space with a sesquilinear form $t$ such that the compatibility condition \ref{hyp:1}
  with $\sigma$ holds.

  For a cylindrical measure $M\in \mathscr{W}(\psi_{h_{\gamma}},\gamma\in C)$ to be concentrated as a Borel Radon measure
  $\mu$ on the separable Hilbert space $X_q$, it is sufficient that Hypothesis \ref{hyp:2} is satisfied. The
  Fourier transform $\hat{\mu}$ is furthermore continuous on $X_q$; and if $\tilde{\delta} >0$ and
  $\tilde{\mathcal{K}}>0$ are the values for which \ref{hyp:2} holds, then
  \begin{equation*}
    \int_{X_q}^{}q(f)^{\tilde{\delta}}  \mathrm{d}\mu(f)\leq \tilde{\mathcal{K}}\; .
  \end{equation*}
\end{thm}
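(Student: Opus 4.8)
The plan is to deduce the theorem from a single uniform-in-$R$ moment bound: for the constants $\delta,\mathcal{K}$ of Hypothesis~\ref{hyp:2} one has $\int_{E_R}q(f)^{\delta}\,\mathrm{d}\mu_{E_R}(f)\leq \mathcal{K}$ for every $R\in\mathds{N}$, with $\mathcal{K}$ independent of $R$. Fix a cluster point $M=(\mu_{\Phi})_{\Phi\in F(V)}\in\mathscr{W}(\psi_{h_{\gamma}},\gamma\in C)$; after passing to the subnet realizing $M$, the rank-one states $\varrho_{h_{\gamma}}=\lvert\psi_{h_{\gamma}}\rangle\langle\psi_{h_{\gamma}}\rvert$ satisfy $\varrho_{h_{\gamma}}\to_{\mathfrak{T}}M$, and via the chain of isomorphisms $V/\Phi_R\cong(V/\Phi_R)'\cong\Phi_R^{\circ}\cong E_R$ we identify $\mu_{\Phi_R}$ with its pushforward $\mu_{E_R}$ on the finite-dimensional Hilbert space $E_R\subset X_q$. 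Once the uniform bound is in hand the conclusion is immediate: Chebyshev's inequality gives $\mu_{E_R}(\{q\geq r\})\leq \mathcal{K}\,r^{-\delta}$, so \eqref{eq:22} holds (uniformly in $R$) and the criterion of \citep{MR0466482} yields concentration of $M$ as a Borel Radon measure $\mu$ on $X_q$; then, since $\mu_{E_R}=(p_R)_{*}\mu$ and $q(p_Rf)\nearrow q(f)$, monotone convergence in $R$ produces $\int_{X_q}q^{\delta}\,\mathrm{d}\mu\leq\mathcal{K}$, which is the asserted bound with $\tilde{\delta}=\delta$, $\tilde{\mathcal{K}}=\mathcal{K}$. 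Continuity of $\hat{\mu}$ is then exactly the elementary estimate carried out just before the statement in \eqref{eq:10}.

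The heart of the argument is the uniform moment bound, and it is here that the cylindrical pseudodifferential calculus of Section~\ref{sec:cylindr-quant-regul} enters. For fixed $R$ the function $q_R^{\delta}$, the $\delta$-th power of the $X_q$-norm-squared restricted to $E_R$, is a cylindrical symbol with base $\Phi_R$ lying in a Hörmander class $S_{\Phi_R}(\langle w\rangle^{s},g_{\Phi_R})$ of order matched to $\delta$. By the compatibility Hypothesis~\ref{hyp:1} the $\mathbf{m}$-number operator $\mathbf{N}_{\omega,R}$ of \eqref{eq:17} is precisely the quantization of $q_R$ on $E_R$ (just as $\mathrm{d}\Gamma_h(\lvert k\rvert)$ is the quantization of $\lVert z\rVert^2_{\mathscr{F}\dot{H}^{1/2}}$ in the introductory example), so $\mathbf{N}_{\omega,R}^{\delta}$ and the Weyl quantization $\mathrm{Op}_{\frac{1}{2}}^{h_{\gamma}}(q_R^{\delta})$ have the same principal symbol. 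Hypothesis~\ref{hyp:2} furnishes exactly the number-operator moment control $\mathbf{n}_{\omega,R}^{\delta}(\psi_{h_{\gamma}})=\Tr(\mathbf{N}_{\omega,R}^{\delta}\varrho_{h_{\gamma}})\leq\mathcal{K}$ required to run Proposition~\ref{prop:2} in its $\mathbf{m}$-adapted form, which then gives $\lim_{\gamma}\Tr\bigl(\pi_{\omega}(\mathrm{Op}_{\frac{1}{2}}^{h_{\gamma}}(q_R^{\delta}))\varrho_{h_{\gamma}}\bigr)=\int_{E_R}q^{\delta}\,\mathrm{d}\mu_{E_R}$. Comparing the two quantizations, their difference has lower semiclassical order and its expectation against $\varrho_{h_{\gamma}}$ is $O(h_{\gamma})$, again controlled by Hypothesis~\ref{hyp:2}; hence $\Tr(\mathrm{Op}_{\frac{1}{2}}^{h_{\gamma}}(q_R^{\delta})\varrho_{h_{\gamma}})\leq \mathbf{n}_{\omega,R}^{\delta}(\psi_{h_{\gamma}})+o(1)\leq\mathcal{K}+o(1)$, and passing to the limit yields $\int_{E_R}q^{\delta}\,\mathrm{d}\mu_{E_R}\leq\mathcal{K}$, with a bound independent of $R$ precisely because the bound in Hypothesis~\ref{hyp:2} is the supremum over $R$ of $\mathbf{n}_{\omega,R}^{\delta}$.

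I expect the main obstacle to be the control, uniform in $R$, of the remainder between the functional-calculus power $\mathbf{N}_{\omega,R}^{\delta}$ and the symbol power $\mathrm{Op}_{\frac{1}{2}}^{h_{\gamma}}(q_R^{\delta})$ for non-integer $\delta$: one must show that this remainder, although it lives on a $2R$-dimensional phase space, has an expectation tending to $0$ as $h_{\gamma}\to0$ for each fixed $R$, while the resulting estimate stays $R$-independent. For integer $\delta$ this is a finite Wick-ordering computation, but for general $\delta>0$ it requires either the calculus of complex powers of elliptic (Shubin-class) operators or, more concretely, the explicit harmonic-oscillator spectrum of $\mathbf{N}_{\omega,R}$, which makes $(h_{\gamma}^{-1}\mathbf{N}_{\omega,R})^{\delta}$ diagonal and allows a mode-by-mode comparison with $q_R^{\delta}$. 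Respecting the order of limits, $\gamma$ (semiclassical) first and $R$ afterwards, is what keeps every $R$-dependent error harmless throughout.
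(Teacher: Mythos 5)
Your overall architecture is sound and close to the paper's: the same chain of identifications $V/\Phi_R\cong\Phi_R^{\circ}\cong E_R$, the same use of the $\mathbf{m}$-adapted calculus, a Prokhorov-type criterion uniform in $R$, and the limits taken in the order $\gamma$ first, $R$ last. Your reorganization (prove the uniform moment bound first, then get the tail estimate \eqref{eq:22} by Chebyshev) is a legitimate streamlining of the paper, which instead proves the tail bound \eqref{eq:9} directly with a cutoff $\chi$ supported away from the unit ball and fully elliptic weights, and only afterwards proves the moment bound \eqref{eq:10}. However, there is a genuine gap at the step on which everything rests: you invoke Proposition~\ref{prop:2} for the symbol $q_R^{\tilde\delta}$ to get the \emph{equality} $\lim_{\gamma}\Tr\bigl(\pi_{\omega}(\mathrm{Op}_{\frac{1}{2}}^{h_{\gamma}}(q_R^{\tilde\delta}))\varrho_{h_{\gamma}}\bigr)=\int_{E_R}q^{\tilde\delta}\,\mathrm{d}\mu_{E_R}$. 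Proposition~\ref{prop:2} pairs the trace condition $\Tr(\langle N_{\omega,\Phi}\rangle^{\delta}\varrho_{h_{\gamma}})\leq C_{\delta}$ with symbols in $S_{\Phi}(\langle w\rangle^{\delta},g_{\Phi})$, while $q^{\tilde\delta}\sim\langle w\rangle^{2\tilde\delta}$; since $\mathbf{N}_{\omega,R}$ quantizes $q\sim\langle w\rangle^{2}$, Hypothesis~\ref{hyp:2} controls phase-space moments of order exactly $2\tilde\delta$, so your symbol sits precisely at the borderline, with no room to spare. At this borderline the equality can genuinely fail: a net of states placing mass $\epsilon_{h}$ at phase-space points of norm $\sim(\mathcal{K}/\epsilon_{h})^{1/2\tilde\delta}$ keeps $\langle\mathbf{N}_{\omega,R}^{\tilde\delta}\rangle$ bounded, yet that mass disappears from the limit measure, so only the one-sided inequality $\int_{E_R}q^{\tilde\delta}\,\mathrm{d}\mu_{E_R}\leq\liminf_{\gamma}\langle\mathbf{N}_{\omega,R}^{\tilde\delta}\rangle$ survives. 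Your sketch of the remainder estimate ($O(h_{\gamma})$ comparison of $\mathbf{N}_{\omega,R}^{\tilde\delta}$ with the Weyl quantization, via ellipticity as in \eqref{eq:14}) is fine, but it does not cure this order mismatch.

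The repair is exactly the device the paper uses: truncate the symbol to $q(\cdot)^{\tilde\delta}\mathbf{1}_0(r^{-1}\,\cdot\,)$ with $\mathbf{1}_0\in C_0^{\infty}(V/\Phi_R)$ a smooth indicator of a neighbourhood of zero, so that the symbol is compactly supported and Lemma~\ref{lemma:1}/Proposition~\ref{prop:2} apply without any growth issue; then use the G\aa rding-type operator inequality $(\mathbf{N}_{\omega_{h},R})^{\tilde\delta}\geq \mathbf{Op}_{\frac{1}{2}}^{h}\bigl(q(\cdot)^{\tilde\delta}\mathbf{1}_0(r^{-1}\,\cdot\,)\bigr)-\mathcal{K}_R\,h\,(\mathbf{N}_{\omega_{h},R})^{\tilde\delta}$, pass to the semiclassical limit at fixed $R$ and $r$ (the $R$-dependent constant $\mathcal{K}_R$ is harmless because it multiplies $h$), and finally let $r\to\infty$ by monotone convergence. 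This yields precisely the one-sided bound $\int_{E_R}q^{\tilde\delta}\,\mathrm{d}\mu_{E_R}\leq\tilde{\mathcal{K}}$ uniformly in $R$, which is all your Chebyshev step and your final monotone-convergence step need; with that substitution your proof goes through, and the remaining ingredients (continuity of $\hat\mu$ from \eqref{eq:10}, identification of the marginals, no loss of mass under Hypothesis~\ref{hyp:2}) are handled as you indicate.
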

\begin{proof}
  Hypothesis~\ref{hyp:2} is sufficient to have no loss of mass (and hence convergence of the generating
  functional) when the state is restricted to each $\Phi_R^{\circ}$. We have to prove that
  Hypothesis~\ref{hyp:2} also implies both \eqref{eq:9} and \eqref{eq:10}. First of all, let us prove
  \eqref{eq:9}. As discussed in Section~\ref{sec:motivation}, the $\mathbf{m}$-creation and annihilation
  operators establish a correspondence with the standard pseudodifferential calculus associated to the
  Heisenberg group $\mathbf{H}(\Phi^{\circ}_{E_R},\sigma)$, and that is assured by the compatibility condition
  \ref{hyp:1}. Then \eqref{eq:9} is proved using the argument introduced in \citep[][Theorem
  6.2]{ammari:nier:2008
  }. Let $(\psi_{h_{\gamma}})_{\gamma\in C}$ be a net of vectors in regular GNS representations, and let
  $\Phi^{\circ}_{E_R}$ be defined as above for some $R\in \mathds{N}$. Then Hypothesis \ref{hyp:2} yields that
  \begin{equation}
    \label{eq:13}
    \langle \psi_{h_{\gamma}}  , \mathbf{N}_{\omega_{h_{\gamma}},R}^{\tilde{\delta}}\,\psi_{h_{\gamma}} \rangle_{H_{\omega_{h_{\gamma}}}}\leq \tilde{\mathcal{K}}
  \end{equation}
  uniformly in $\gamma$ for some $\tilde{\delta}>0$. By means of the link between Wick and Weyl quantization, it is
  well-known that
  \begin{equation*}
    1+\tfrac{h R}{2}+\mathbf{N}_{\omega_h,R}= \mathbf{Op}^h_{\frac{1}{2}}\bigl(1+q(\cdot )\bigr) \; ,
  \end{equation*}
  where $q: V/\Phi_R\to \mathds{R}^+$ is the euclidean squared norm of the finite dimensional (complex) Hilbert
  space $V/\Phi_R\cong E_R$, and $\mathbf{Op}^h_{\frac{1}{2}}$ is the Weyl quantization procedure defined in
  Section~\ref{sec:cylindr-quant-regul}, but with each $W_h(\mathbb{F}x)$, $x\in X$, replaced by
  $W_h(\mathbb{F}\mathbf{m}(x))$. The operator
  \begin{equation*}
    \mathbf{Op}^h_{\frac{1}{2}}\bigl(1+q(\cdot ) \bigr)
  \end{equation*}
  is fully elliptic \citep[see
  e.g.][]{MR743094
  }. Therefore for any $s\in \mathds{R}$,
  \begin{equation}
    \label{eq:14}
    \bigl(1+\tfrac{h R}{2}+\mathbf{N}_{\omega_h,R}\bigr)^s=\mathbf{Op}^h_{\frac{1}{2}}\bigl((1+q(\cdot ))^s+O(h) \bigr)\; ,
  \end{equation}
  where the function $h^{-1}O(h)$ is uniformly bounded in the Hörmander class
  \begin{equation*}
    S_{R}\Bigl(\langle w \rangle_{\Phi_R}^{s-2},\tfrac{\lvert \mathrm{d}w \rvert_{\Phi_R}^2}{\langle w \rangle_{\Phi_R}^2}\Bigr)\; .
  \end{equation*}
  For any $h_{\gamma}\leq R^{-1}$, \eqref{eq:13} implies there exists $\tilde{\mathcal{K}}_{\tilde{\delta}}>0$ such that
  \begin{equation*}
    \bigl\langle \psi_{h_{\gamma}}  , \bigl(1+\tfrac{h_{\gamma} R}{2}+\mathbf{N}_{\omega_{h_{\gamma}},R}\bigr)^{\tilde{\delta}}\psi_{h_{\gamma}} \bigr\rangle_{H_{\omega_{h_{\gamma}}}}\leq \tilde{\mathcal{K}}_{\tilde{\delta}}\; .
  \end{equation*}
  Now let $r\geq 1$, and $C_R\subseteq C$ such that for any $\gamma\in C_R$, $h_{\gamma}\leq R^{-1}$. Using standard pseudodifferential
  techniques in $V/\Phi_R$
  \citep[see][]{ammari:nier:2008
  }, it is possible to prove that for any $\gamma\in C_R$ and $\delta>0$, there exists a $K>0$ such that the following
  operator inequality holds on $H_{\omega_{h_{\gamma}}}$:
  \begin{equation*}
    \mathbf{Op}_{\frac{1}{2}}^{h_{\gamma}}\bigl(\chi(r^{-\frac{1}{2}}\,\cdot \,)\bigr)\leq (1+r)^{-\delta}  (1+2Kh_{\gamma})\; \mathbf{Op}^{h_{\gamma}}_{\frac{1}{2}}\bigl((1+q(\cdot ))^{-\delta/2} \bigr)^{-2}\; .
  \end{equation*}
  Here $\chi(\cdot )\in C^{\infty}(V/\Phi_R)$ is non-negative and such that $\chi=0$ in a neighbourhood of
  $\{f\in V/\Phi_R\,,\, \sqrt{q(f)}\leq 1\}$. In addition, \eqref{eq:14} yields there exists $K'>0$ such that
  \begin{equation*}
    \Bigl\lVert \mathbf{Op}^{h_{\gamma}}_{\frac{1}{2}}\bigl((1+q(\cdot ))^{-\delta/2} \bigr)^{-2}\bigl(1+\tfrac{h_{\gamma} R}{2}+\mathbf{N}_{\omega_{h_{\gamma}},R}\bigr)^{\delta}  \Bigr\rVert_{\mathscr{B}(H_{\omega_{h_{\gamma}}})}^{}\leq K'h_{\gamma}+1\; .
  \end{equation*}
  Therefore for any $\gamma\in C_M$ and $r\geq 1$, there exists $\tilde{K}_{\tilde{\delta}}>0$ such that
  \begin{equation*}
    \bigl\langle \psi_{h_{\gamma}}  , \mathbf{Op}_{\frac{1}{2}}^{h_{\gamma}}\bigl(\chi(r^{-\frac{1}{2}}\,\cdot \,)\bigr) \psi_{h_{\gamma}}\bigr\rangle_{H_{\omega_{h_{\gamma}}}}\leq \tilde{K}_{\tilde{\delta}}(1+r)^{-\tilde{\delta}}\; .
  \end{equation*}
  Now let $M\in \mathscr{W}(\psi_{h_{\gamma}},\gamma\in C)$, and
  $(\psi_{h_{\gamma}})_{\gamma\in \underline{C}}$ the subnet of $(\psi_{h_{\gamma}})_{\gamma\in C_M}$ whose
  restrictions to $\bigl(\mathbf{W}_{h_{\gamma}}(\Phi^{\circ}_R,\sigma)\bigr)_{\gamma\in C_M}$,
  $R\in \mathds{N}$, converge to $\mu_{\Phi_R}$ with respect to the $\mathfrak{T}_{\Phi_R}$ topology. Then by
  Proposition~\ref{prop:2},
  \begin{equation*}
    \lim_{\gamma\in \underline{C}}\,\bigl\langle \psi_{h_{\gamma}}  , \mathbf{Op}_{\frac{1}{2}}^{h_{\gamma}}\bigl(\chi(r^{-\frac{1}{2}}\,\cdot \,)\bigr) \psi_{h_{\gamma}}\bigr\rangle_{H_{\omega_{h_{\gamma}}}}\mspace{-10mu}=\int_{V/\Phi_R}^{}\chi(r^{-\frac{1}{2}}f )  \mathrm{d}\mu_{\Phi_R}(f)\; .
  \end{equation*}
  Hence it follows that
  \begin{equation*}
    \begin{split}
      \mu_{\Phi_R}\bigl(\{f\in V/\Phi_R\,,\, q(f)\geq r\}\bigr)\leq \int_{V/\Phi_R}^{}\chi(r^{-\frac{1}{2}}f )  \mathrm{d}\mu_{\Phi_R}(f)\\
      =\lim_{\gamma\in \underline{C}}\,\bigl\langle \psi_{h_{\gamma}}  , \mathbf{Op}_{\frac{1}{2}}^{h_{\gamma}}\bigl(\chi(r^{-\frac{1}{2}}\,\cdot \,)\bigr) \psi_{h_{\gamma}}\bigr\rangle_{H_{\omega_{h_{\gamma}}}}\leq \tilde{K}_{\tilde{\delta}}(1+r)^{-\tilde{\delta}}\; .
    \end{split}
  \end{equation*}
  It remains to prove \eqref{eq:10} for $\tilde{\delta}$ and $\tilde{\mathcal{K}}$. Let
  $\mathbf{1}_0\in C_0^{\infty}(V/\Phi_R)$ be a smooth indicator function of a neighbourhood of $0$. The
  following inequality, for any $\delta>0$ and $r\geq 1$, and some $\mathcal{K}_R>0$, is again a consequence
  of standard pseudodifferential techniques:
  \begin{equation*}
    (\mathbf{N}_{\omega_{h},R})^{\delta}\geq \mathbf{Op}_{\frac{1}{2}}^h\bigl(q(\cdot )^{\delta}\mathbf{1}_0(r^{-1}\,\cdot\, )\bigr)- \mathcal{K}_R h \, (\mathbf{N}_{\omega_{h},R})^{\delta}
  \end{equation*}
  Reasoning as above, and using the fact that by \eqref{eq:9} $M$ concentrates as a Radon measure $\mu$ on
  $X_q$, we obtain
  \begin{equation*}
    \int_{E_R}^{}q(f)^{\tilde{\delta}}  \mathrm{d}\mu(f)\leq \tilde{\mathcal{K}}\; .
  \end{equation*}
  Now since the bound is uniform with respect to $R\in \mathds{N}$, we can take the limit $R\to \infty$ using dominated
  convergence.
\end{proof}

\subsection{Duals of nuclear spaces.}
\label{sec:dual-nuclear-space}

Now, let us suppose that $V'\supset \mathscr{N}$, where $\mathscr{N}$ is a nuclear barrelled space. Then
concentration of Wigner measures on $\mathscr{N}'$, endowed with any topology $\mathfrak{R}$ between
$\sigma(\mathscr{N}',\mathscr{N})$ and the Mackey topology $\tau(\mathscr{N}',\mathscr{N})$, admits an easy
characterization.

It is well-known that a cylindrical measure $M$ on $V$ is concentrated as a Radon measure $\mu$ on
$\mathscr{N}'_{\mathfrak{R}}$ iff $(\mu_{\Phi})_{\Phi\in F(\mathscr{N}'_{\mathfrak{R}})}$ can be identified
with a projective subfamily of $M$ and the restriction of $\hat{M}$ to $\mathscr{N}$ is continuous \citep[see
e.g.][]{MR0276436
}. Continuity properties of the Fourier transform of $M$ are inherited by continuity properties of the family
of generating functionals of regular states $\omega_{h_{\gamma}}$, $\gamma\in C$. In particular, equicontinuity of
\begin{equation}
  \label{eq:15}
  \mathbf{G}_B=\bigl\{G_{\omega_{h_{\gamma}}}\bigr\rvert_{\mathscr{N}}\,,\,\gamma\in C\bigr\}
\end{equation}
is sufficient to guarantee the continuity of its cluster points (in the topology of simple convergence), as
$h_{\gamma}\to 0$. This is in turn sufficient to prove that any
$M\in \mathscr{W}(\omega_{h_{\gamma}},\gamma\in C)$ is concentrated as a Radon measure on $\mathscr{N}'_{\mathfrak{R}}$.

\begin{thm}
  \label{thm:2}
  Let $(\omega_{h_{\gamma}})_{\gamma\in C}$ be a net of regular states on $\mathbf{W}_{h_{\gamma}}(V',\sigma)$ such that
  \begin{equation*}
    \sup_{\gamma\in C}\omega_{h_{\gamma}}\bigl(W_{h_{\gamma}}(0)\bigr)<\infty\; .
  \end{equation*}
  If there exists a nuclear barrelled space $\mathscr{N}$ such that $\mathscr{N}\subseteq V'$, and
  $(V/\Phi)_{\Phi\in F(\mathscr{N}'_{\mathfrak{R}})}$ is identified with a projective subfamily of
  $(V/\Phi)_{\Phi\in F(V)}$, then equicontinuity of the family $\mathbf{G}_B$ defined in~\eqref{eq:15} implies
  that any $M\in \mathscr{W}(\omega_{h_{\gamma}},\gamma\in C)$ is concentrated as a Borel Radon measure $\mu$ on
  $\mathscr{N}'_{\mathfrak{R}}$.
\end{thm}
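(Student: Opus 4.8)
The plan is to verify, for an arbitrary Wigner measure $M\in \mathscr{W}(\omega_{h_{\gamma}},\gamma\in C)$, the two conditions of the cited characterization \citep{MR0276436} of those cylindrical measures that are Radon on $\mathscr{N}'_{\mathfrak{R}}$: namely, that (i) $(\mu_{\Phi})_{\Phi\in F(\mathscr{N}'_{\mathfrak{R}})}$ be a projective subfamily of $M=(\mu_{\Phi})_{\Phi\in F(V)}$, and (ii) the restriction $\hat{M}\rvert_{\mathscr{N}}$ of the Fourier transform of $M$ be continuous on $\mathscr{N}$. Condition (i) holds by the standing hypothesis that $(V/\Phi)_{\Phi\in F(\mathscr{N}'_{\mathfrak{R}})}$ is identified with a projective subfamily of $(V/\Phi)_{\Phi\in F(V)}$, so the whole argument reduces to establishing (ii).

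First I would recall the nature of the convergence furnished by Proposition~\ref{prop:1}. Since both the regular states and the cylindrical measures are embedded in $\mathds{C}^{V'}$ -- the states through their generating functionals $\xi\mapsto G_{\omega_{h_{\gamma}}}(\xi)=\omega_{h_{\gamma}}(W_{h_{\gamma}}(\xi))$, the measures through their Fourier transforms $\xi\mapsto \hat{M}(\xi)$ -- and $\mathfrak{T}$ is the topology of simple convergence there, the statement that $M$ is a $\mathfrak{T}$-cluster point of $(\omega_{h_{\gamma}})_{\gamma\in C}$ means precisely that there is a subnet $\underline{C}\subseteq C$ along which
\[
  \lim_{\gamma\in \underline{C}}G_{\omega_{h_{\gamma}}}(\xi)=\hat{M}(\xi)\qquad \text{for every }\xi\in V'.
\]
In particular the convergence holds pointwise on $\mathscr{N}\subseteq V'$.

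The key step is then the passage from pointwise convergence to continuity of the limit, which is where the equicontinuity hypothesis enters. The family $\mathbf{G}_B=\{G_{\omega_{h_{\gamma}}}\rvert_{\mathscr{N}},\gamma\in C\}$ is equicontinuous by assumption, hence so is its restriction to the subnet $\underline{C}$. Fixing $\xi_0\in \mathscr{N}$ and $\varepsilon>0$, equicontinuity supplies a neighbourhood $U$ of $\xi_0$ in $\mathscr{N}$ with $\lvert G_{\omega_{h_{\gamma}}}(\xi)-G_{\omega_{h_{\gamma}}}(\xi_0)\rvert<\varepsilon$ for all $\xi\in U$ and all $\gamma$; letting $\gamma\in \underline{C}$ pass to its limit yields $\lvert \hat{M}(\xi)-\hat{M}(\xi_0)\rvert\leq \varepsilon$ for all $\xi\in U$. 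As $\xi_0$ and $\varepsilon$ are arbitrary, $\hat{M}\rvert_{\mathscr{N}}$ is continuous, which is condition (ii). With (i) and (ii) in hand, the characterization \citep{MR0276436} forces $M$ to concentrate as a Borel Radon measure $\mu$ on $\mathscr{N}'_{\mathfrak{R}}$.

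I expect the only genuinely delicate point to be of a foundational nature: one must ensure that the $\mathfrak{T}$-cluster point is reached through a subnet along which the generating functionals converge pointwise to a function $\hat{M}$ that is itself the Fourier transform of a genuine (finite) cylindrical measure, rather than to some spurious limit or one with mass escaped at the origin. The first is guaranteed by Proposition~\ref{prop:1}, which already produces $M$ as an honest cylindrical measure; the second follows from the equicontinuity of $\mathbf{G}_B$ at $0$ together with the uniform bound $\sup_{\gamma}\omega_{h_{\gamma}}(W_{h_{\gamma}}(0))<\infty$, which make $\hat{M}$ continuous at the origin with $\hat{M}(0)<\infty$, so that the nuclear barrelled version of the Minlos--Sazonov characterization \citep{MR0276436} delivers a finite Radon measure. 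Everything else is a routine invocation of the standard fact that a pointwise limit of an equicontinuous family is continuous.
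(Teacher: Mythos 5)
Your proposal is correct and follows essentially the same route as the paper: the paper's (very terse) argument is precisely the reduction to the two conditions of the cited characterization \citep{MR0276436}, with condition (i) granted by hypothesis and condition (ii) obtained because equicontinuity of $\mathbf{G}_B$ forces continuity of any pointwise ($\mathfrak{T}$-) cluster point of the generating functionals restricted to $\mathscr{N}$. Your write-up merely makes explicit the standard $\varepsilon$-argument and the finiteness/no-loss-of-mass bookkeeping that the paper leaves implicit via Proposition~\ref{prop:1}.
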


The formulation of Theorem~\ref{thm:2} seems quite natural, however it may be difficult to apply it in
concrete situations, for the equicontinuity of $\mathbf{G}_B$ may be hard to prove without a detailed
knowledge of the generating functional.

\subsection{Duals of Banach spaces.}
\label{sec:locally-conv-spac}

In this section we consider concentration in dual spaces $\mathcal{X}'$, where $\mathcal{X}$ is a Banach
space. There is no loss of generality in considering $\mathcal{X}$ to be real, and all the results extend to
complex Banach spaces. It should also be possible to formulate a similar result for duals of locally convex
spaces, using the Bourbaki-Alaoglu variant of the Banach-Alaoglu's Theorem.


The idea is to adapt the procedure used in Section~\ref{sec:separ-hilb-spac} to prove tightness in
$\mathcal{X}'$ of a suitable projective subfamily of $M\in \mathscr{W}(\psi_{h_{\gamma}},\gamma\in
C)$. However, we will only be able to prove the concentration of $M$ as a Borel Radon measure $\mu$ on
$\mathcal{X}'$ endowed with the ultraweak dual topology. To avoid confusion, let us denote by
$\mathcal{X}_{\mathrm{w}}'$ the space $\mathcal{X}'$ endowed with the $\sigma(\mathcal{X}',\mathcal{X})$
topology. Let us recall that $(\mathcal{X}'_{\mathrm{w}})'=\mathcal{X}$.


\begin{lemma}[Tightness]
  \label{lemma:2}
  A cylindrical measure $(\mu_{\Psi})_{\Psi\in F(B_{\mathrm{w}}')}$ is concentrated as a Borel Radon measure
  $\mu$ on $\mathcal{X}_{\mathrm{w}}'$ if 
  for any $\varepsilon>0$, there exists a $\delta>0$ such that for any $\Psi\in F(\mathcal{X}_{\mathrm{w}}')$ of codimension
  $2n$, $n\in \mathds{N}$,
  \begin{equation}
    \label{eq:16}
    \mu_{\Psi}\bigl(\mathcal{X}_{\mathrm{w}}'/\Psi\setminus p_{\Psi}(\{x\in \mathcal{X}',\lVert x  \rVert_{\mathcal{X}'}^2\leq \delta\})\bigr)< \varepsilon\; .
  \end{equation}
\end{lemma}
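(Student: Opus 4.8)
The plan is to prove that the tightness condition \eqref{eq:16} is exactly what is needed to invoke a Prokhorov-type theorem identifying when a cylindrical measure extends to a genuine Radon measure on a dual Banach space with the weak-$*$ topology. First I would recall the structural characterization (as in \citep{MR0276436,MR0466482}): a cylindrical measure $(\mu_{\Psi})_{\Psi\in F(\mathcal{X}_{\mathrm{w}}')}$ concentrates as a Radon measure on $\mathcal{X}_{\mathrm{w}}'$ if and only if it is a \emph{tight} cylindrical measure, i.e. for every $\varepsilon>0$ there is a weak-$*$ compact set $K\subset \mathcal{X}_{\mathrm{w}}'$ such that $\mu_{\Psi}\bigl((\mathcal{X}_{\mathrm{w}}'/\Psi)\setminus p_{\Psi}(K)\bigr)<\varepsilon$ for all $\Psi$ simultaneously. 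The key observation is that, by the Banach--Alaoglu theorem, the closed balls $B_{\delta}=\{x\in\mathcal{X}',\,\lVert x\rVert_{\mathcal{X}'}^2\le\delta\}$ are precisely the weak-$*$ compact sets whose finite-dimensional projections \eqref{eq:16} controls. Thus the hypothesis is a reformulation of tightness against this distinguished exhausting family of compacta, and the two notions coincide because every weak-$*$ compact set is contained in some ball (uniform boundedness).

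Next I would carry out the construction of the limit measure explicitly rather than merely cite abstract machinery, since it clarifies where \eqref{eq:16} enters. The cylindrical measure $(\mu_{\Psi})_{\Psi}$ defines a finitely additive set function on the algebra of cylinder sets in $\mathcal{X}_{\mathrm{w}}'$; I would show it is in fact countably additive (hence $\sigma$-additive, hence extends by Carathéodory) precisely under \eqref{eq:16}. The decisive step is to verify that the pre-measure is inner regular with respect to the weak-$*$ compact balls: given $\varepsilon>0$, choose $\delta$ as in \eqref{eq:16}, and note $B_{\delta}$ is weak-$*$ compact by Banach--Alaoglu. For any decreasing sequence of cylinder sets with empty intersection, their $\mu$-values must tend to zero; the uniform (in $\Psi$) estimate \eqref{eq:16} prevents mass from escaping to infinity in the weak-$*$ topology, which is exactly the failure mode that would obstruct countable additivity. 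One then obtains a $\sigma$-additive Borel measure $\mu$ on $\mathcal{X}_{\mathrm{w}}'$ whose finite-dimensional marginals are the $\mu_{\Psi}$, and $\mu(\mathcal{X}_{\mathrm{w}}'\setminus B_{\delta})\le\varepsilon$ gives both finiteness and Radon regularity.

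I would then confirm the compatibility of the marginals: since $\mu$ restricts correctly onto each quotient $\mathcal{X}_{\mathrm{w}}'/\Psi$ via the projection $p_{\Psi}$, and since the cylindrical family $(\mu_{\Psi})_{\Psi}$ was assumed projective, the extension $\mu$ is the unique Radon measure inducing $(\mu_{\Psi})_{\Psi}$, so $M$ genuinely concentrates on $\mathcal{X}_{\mathrm{w}}'$ in the sense of Section~\ref{sec:conc-wign-meas}.

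The main obstacle I anticipate is the passage from a uniform bound on \emph{finite-dimensional marginals} \eqref{eq:16} to genuine \emph{tightness of the cylindrical measure} in the infinite-dimensional weak-$*$ topology. The subtlety is that $F(\mathcal{X}_{\mathrm{w}}')$ indexes quotients by \emph{weak-$*$ closed} finite-codimensional subspaces, and one must ensure the family of balls $p_{\Psi}(B_{\delta})$ behaves coherently as $\Psi$ varies so that a single $\delta$ controls all projections simultaneously. Here the reflexivity-like pairing $(\mathcal{X}'_{\mathrm{w}})'=\mathcal{X}$ is essential: it guarantees that weak-$*$ compact sets are exactly the images of norm-bounded balls, so the single-parameter family $(B_{\delta})_{\delta>0}$ is cofinal among weak-$*$ compacta. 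Once this cofinality is established, the Prokhorov-type extension is routine, and the uniformity in $\Psi$ present in \eqref{eq:16} is exactly the hypothesis needed to invoke it.
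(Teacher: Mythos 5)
Your approach is essentially the paper's: the proof there is exactly ``the balls $B_{\delta}=\{x\in\mathcal{X}',\,\lVert x\rVert_{\mathcal{X}'}^2\le\delta\}$ are weak-$*$ compact by Banach--Alaoglu, hence \eqref{eq:16} is a tightness condition, and Prokhorov's theorem for cylindrical measures \citep[Theorem I.21]{MR0426084} concludes.'' Two comments on the surrounding material in your sketch. Your second paragraph (re-deriving the extension by hand via Carath\'eodory) is not needed and is the least reliable part: countable additivity on the cylinder algebra only yields a measure on the cylindrical $\sigma$-algebra, which for non-separable $\mathcal{X}$ is in general strictly smaller than the Borel $\sigma$-algebra of $\mathcal{X}'_{\mathrm{w}}$, and Radon (inner) regularity is not automatic from that extension; the compact-regularity argument you would need to supply \emph{is} Prokhorov's theorem, so it is better cited than re-proved loosely. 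Also, the cofinality of the balls among all weak-$*$ compacta (your uniform-boundedness remark) is irrelevant for the stated, one-directional implication: only compactness of the $B_{\delta}$ is used.

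The one genuine gap is different from the obstacle you anticipate: hypothesis \eqref{eq:16} is assumed only for $\Psi\in F(\mathcal{X}'_{\mathrm{w}})$ of \emph{even} codimension $2n$ (these are the $\Psi$ whose polars are symplectic, hence the only ones for which the paper's number operators and Hypothesis~\ref{hyp:4} make sense), while Prokhorov's theorem needs the tightness estimate along all finite-dimensional projections, or at least along a cofinal subfamily. Your argument never addresses this mismatch. The missing observation, which the paper states explicitly, is that the even-codimension subspaces are cofinal in $F(\mathcal{X}'_{\mathrm{w}})$ ordered by reverse inclusion (any $\Phi$ of odd codimension contains a weak-$*$ closed $\Psi$ of codimension one more), and tightness propagates from a cofinal subfamily to the whole family: if $\Psi\subset\Phi$ and $p_{\Phi\Psi}:\mathcal{X}'_{\mathrm{w}}/\Psi\to\mathcal{X}'_{\mathrm{w}}/\Phi$ is the connecting projection, then $p_{\Phi\Psi}^{-1}\bigl(p_{\Phi}(B_{\delta})\bigr)\supseteq p_{\Psi}(B_{\delta})$, so projectivity gives
\begin{equation*}
  \mu_{\Phi}\bigl(\mathcal{X}'_{\mathrm{w}}/\Phi\setminus p_{\Phi}(B_{\delta})\bigr)\le\mu_{\Psi}\bigl(\mathcal{X}'_{\mathrm{w}}/\Psi\setminus p_{\Psi}(B_{\delta})\bigr)<\varepsilon\;.
\end{equation*}
With this line added, your proof closes and coincides with the paper's.
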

\begin{proof}
  By Banach-Alaoglu's theorem the ball $\{x\in \mathcal{X}',\lVert x \rVert_{\mathcal{X}'}^2\leq \delta\}$ is compact in
  $\mathcal{X}_{\mathrm{w}}'$. In addition, the projective family of measures indexed by spaces of even codimension is a
  projective subfamily of $(\mu_{\Psi})_{\Psi\in F(\mathcal{X}_{\mathrm{w}}')}$ with cofinal index set. By a theorem of
  Prokhorov \citep[][Theorem
  I.21]{MR0426084
  }, \eqref{eq:16} is then sufficient to prove that $\mu$ is concentrated as a Radon measure on
  $\mathcal{X}_{\mathrm{w}}'$.
\end{proof}

The sufficient condition on quantum states is similar to the one given for separable Hilbert spaces.
\begin{hyp}
  \label{hyp:4}
  Let $(\psi_{h_{\gamma}})_{\gamma\in C}$ be a net of vectors on regular GNS representations of $\mathbf{W}_{h_{\gamma}}(\mathcal{X},\sigma)$, with
  \begin{equation*}
    \sup_{\gamma\in C}\lVert \psi_{h_{\beta}}  \rVert_{H_{\omega_{h_{\gamma}}}}^{}<\infty\; .
  \end{equation*}
  There exists $\delta>0$ and $\mathcal{K}>0$ such that uniformly in $\gamma\in C$,
  \begin{equation*}
    \sup_{\underset {\dim \mathcal{X}'_{\mathrm{w}}/\Psi=2n}{\Psi\in F(\mathcal{X}'_{\mathrm{w}})}}  \langle \psi_{h_{\gamma}}  , N_{\omega_{h_{\gamma}},\Psi}^{\delta} \, \psi_{h_{\gamma}} \rangle_{}\leq \mathcal{K}\; .
  \end{equation*}
\end{hyp}

The strategy is the same as the one used to prove \eqref{eq:9} in Theorem~\ref{thm:1}. Let
$K=\{x\in \mathcal{X},\lVert x \rVert_{\mathcal{X}}^2\leq 1\}$. The key point is to identify, for any $\Psi$ with
even codimension, $K\cap \Psi^{\circ}$ with $p_{\Psi}(K^{\circ})^{\circ}$. 
It is well-known that
\begin{equation*}
  K^{\circ}=\{\xi\in \mathcal{X}', \lVert \xi \rVert_{\mathcal{X}'}^2\leq 1\}\; .
\end{equation*}
In addition, $K\cap \Psi^{\circ}$ is a closed bounded absorbing absolutely convex neighbourhood of zero, and therefore a
closed ball centered in zero (in the locally compact topology of the finite dimensional real vector space
$\Psi^{\circ}$). It is possible to make the identification
$\mathcal{X}'_{\mathrm{w}}/\Psi \overset {\jmath}{\cong} (\mathcal{X}'_{\mathrm{w}}/\Psi)'$ in a way such that
for any $\xi\in (\mathcal{X}'_{\mathrm{w}}/\Psi)'$ and $[x]\in \mathcal{X}'_{\mathrm{w}}/\Psi$
\begin{equation}
  \label{eq:19}
  \jmath^{-1}(\xi)\,\cdot\, [x]=\xi([x])=\xi\, \cdot \, \jmath([x])\; . 
\end{equation}

\begin{lemma}
  \label{lemma:3}
  Let $(\mathcal{X}'_{\mathrm{w}}/\Psi)'\overset {^{\mathrm{t}}p_{\Psi}}{\cong}\Psi^{\circ}$ be the homeomorphism that yields, for any
  $x\in \mathcal{X}'_{\mathrm{w}}$ and $\xi\in (\mathcal{X}'_{\mathrm{w}}/\Psi)'$,
  \begin{equation}
    \label{eq:21}
    \xi([x])= \, ^{\mathrm{t}}p_{\Psi}(\xi)(x)\; .
  \end{equation}
  Then
  \begin{equation*}
    K\cap \Psi^{\circ}= \,^{\mathrm{t}}p_{\Psi}\bigl( p_{\Psi}(K^{\circ})^{\circ} \bigr)\;.
  \end{equation*}
\end{lemma}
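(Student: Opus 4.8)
The plan is to reduce the asserted equality to standard polar calculus in the dual pair $(\mathcal{X}'_{\mathrm{w}},\mathcal{X})$, the only nontrivial inputs being the bipolar theorem and the behaviour of polars under the transpose of $p_{\Psi}$. The conceptual backbone is the elementary identity $\bigl(u(A)\bigr)^{\circ}=(u^{\mathrm{t}})^{-1}(A^{\circ})$, valid for any linear $u$ between dual pairs with transpose $u^{\mathrm{t}}$ and any set $A$ in the domain; it follows verbatim from $\langle u(a),y\rangle=\langle a,u^{\mathrm{t}}(y)\rangle$, with no closures involved. I would apply this with $u=p_{\Psi}$ and $A=K^{\circ}$, so that $u^{\mathrm{t}}={}^{\mathrm{t}}p_{\Psi}$ and the polar $A^{\circ}=(K^{\circ})^{\circ}$ is computed in $(\mathcal{X}'_{\mathrm{w}})'=\mathcal{X}$.

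First I would compute $\bigl(p_{\Psi}(K^{\circ})\bigr)^{\circ}$ directly. For $\xi\in(\mathcal{X}'_{\mathrm{w}}/\Psi)'$ and $\eta\in K^{\circ}$, relation \eqref{eq:21} gives $\xi([\eta])={}^{\mathrm{t}}p_{\Psi}(\xi)(\eta)$, whence $\xi\in\bigl(p_{\Psi}(K^{\circ})\bigr)^{\circ}$ iff $\lvert\eta\bigl({}^{\mathrm{t}}p_{\Psi}(\xi)\bigr)\rvert\leq 1$ for every $\eta\in K^{\circ}$, i.e.\ iff ${}^{\mathrm{t}}p_{\Psi}(\xi)\in(K^{\circ})^{\circ}$. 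Since $K$ is a norm–closed, absolutely convex subset of $\mathcal{X}$, it is $\sigma(\mathcal{X},\mathcal{X}')$–closed, so the bipolar theorem yields $(K^{\circ})^{\circ}=K$. Therefore
\begin{equation*}
  \bigl(p_{\Psi}(K^{\circ})\bigr)^{\circ}=\bigl({}^{\mathrm{t}}p_{\Psi}\bigr)^{-1}(K)\; .
\end{equation*}

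It then remains to push this set forward through ${}^{\mathrm{t}}p_{\Psi}$. As stated, ${}^{\mathrm{t}}p_{\Psi}$ is a homeomorphism of $(\mathcal{X}'_{\mathrm{w}}/\Psi)'$ onto $\Psi^{\circ}$, so $\ran\,{}^{\mathrm{t}}p_{\Psi}=\Psi^{\circ}$; invoking the elementary set identity $f\bigl(f^{-1}(S)\bigr)=S\cap\ran f$ I obtain
\begin{equation*}
  {}^{\mathrm{t}}p_{\Psi}\Bigl(\bigl(p_{\Psi}(K^{\circ})\bigr)^{\circ}\Bigr)={}^{\mathrm{t}}p_{\Psi}\Bigl(\bigl({}^{\mathrm{t}}p_{\Psi}\bigr)^{-1}(K)\Bigr)=K\cap\Psi^{\circ}\; ,
\end{equation*}
which is exactly the claim. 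The only point demanding care — and the main potential pitfall — is the bookkeeping of the two dual pairings: one must check that the polar appearing after transposition is computed against $\mathcal{X}$ itself (so that $(K^{\circ})^{\circ}$ collapses to $K$, and not to some bidual ball in $\mathcal{X}''$), and that $K$ is weakly closed and absolutely convex so the bipolar theorem genuinely applies. Both are immediate from $(\mathcal{X}'_{\mathrm{w}})'=\mathcal{X}$ together with the convexity and norm–closedness of the unit ball, so no real difficulty remains.
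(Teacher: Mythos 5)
Your proof is correct and follows essentially the same route as the paper's: both unwind the polar $p_{\Psi}(K^{\circ})^{\circ}$ via the transpose relation \eqref{eq:21}, identify the resulting set inside $\Psi^{\circ}$ as $K^{\circ\circ}\cap\Psi^{\circ}$ computed in the dual pair $(\mathcal{X}',\mathcal{X})$, and conclude with the bipolar theorem $K^{\circ\circ}=K$. Your packaging of the computation as $(p_{\Psi}(K^{\circ}))^{\circ}=({}^{\mathrm{t}}p_{\Psi})^{-1}(K)$ followed by $f(f^{-1}(S))=S\cap\ran f$ is only a cosmetic reorganization of the paper's single chain of set identities, with the same care taken that the bipolar is formed against $(\mathcal{X}'_{\mathrm{w}})'=\mathcal{X}$ rather than $\mathcal{X}''$.
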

\begin{proof}
  By definition,
  \begin{equation*}
    p_{\Psi}(K^{\circ})^{\circ}=\Bigl\{\xi\in (\mathcal{X}'_{\mathrm{w}}/\Psi)'\,,\, \forall x\in K^{\circ}\;\lvert \xi([x])  \rvert_{}^{}\leq 1\Bigr\}\; .
  \end{equation*}
  Applying the isomorphism, it follows by \eqref{eq:21} that
  \begin{equation*}
    \begin{split}
      ^{\mathrm{t}}p_{\Psi}\bigl( p_{\Psi}(K^{\circ})^{\circ} \bigr)=\Bigl\{\,^{\mathrm{t}}p_{\Psi}(\xi)\in \, ^{\mathrm{t}}p_{\Psi}\bigl((\mathcal{X}'_{\mathrm{w}}/\Psi)'\bigr)\,,\, \forall x\in K\,\lvert \, ^{\mathrm{t}}p_{\Psi} (\xi)(x)  \rvert_{}^{}\leq 1\Bigr\}\\
      =\Bigl\{\zeta\in \Psi^{\circ}\,,\, \forall x\in K^{\circ}\,\lvert \zeta(x)  \rvert_{}^{}\leq 1\Bigr\}=K^{\circ \circ}\cap \Psi^{\circ}= K\cap \Psi^{\circ}\; .
    \end{split}
  \end{equation*}
  In the last equality we have used the bipolar theorem for locally convex spaces in duality \citep[see
  e.g.][II.49 Corollaire 3]{MR633754
  }.
\end{proof}
From Lemma~\ref{lemma:3} it follows that
$\jmath^{-1}\bigl(\, ^{\mathrm{t}}p_{\Psi}^{-1}(K\cap \Psi^{\circ})\bigr)$ is a closed ball centered in zero
of $\mathcal{X}'_{\mathrm{w}}/\Psi$. Hence the tightness property \eqref{eq:16} of Lemma~\ref{lemma:2} is
satisfied iff
\begin{equation}
  \label{eq:20}
  \lim_{r\to \infty}\mu_{\Psi^{\circ}}\bigl(\{\zeta\in \Psi^{\circ}\,,\, \lvert \zeta \rvert_{}^2\geq r\}\bigr)=\lim_{r\to \infty}\mu_{\Psi}\bigl(\{z\in \mathcal{X}'_{\mathrm{w}}/\Psi\,,\, \lvert z\rvert_{}^2\geq r\}\bigr) =0\; ,
\end{equation}
where $\lvert \,\cdot \, \rvert_{}^{}$ denotes the norm on finite dimensional vector spaces, and the measure
$\mu_{\Psi^{\circ}}=\, ^{\mathrm{t}}p_{\Psi}\, _{*}\, \jmath\, _{*}\, \mu_{\Psi}$ is the pushforward of
$\mu_{\Psi}$. Equation~\eqref{eq:20} is analogous to the equivalence among~\eqref{eq:22} and~\eqref{eq:9}, and
therefore it is true for an $M\in \mathscr{W}(\psi_{h_{\gamma}},\gamma\in C)$, provided there exist $\delta>0$ and $C>0$ such that
\begin{equation}
  \label{eq:23}
  \langle \psi_{h_{\gamma}}  , N_{\omega_{h_{\gamma}},\Psi}^{\delta} \, \psi_{h_{\gamma}} \rangle_{}\leq \mathcal{K}\; ,
\end{equation}
as explained in the proof of Theorem~\ref{thm:1}. Obviously~\eqref{eq:23} is implied, for any
$\Psi\in F(\mathcal{X}'_{\mathrm{w}})$ of even codimension, by Hypothesis \ref{hyp:4}. Therefore we have proved the
following result.

\begin{thm}
  \label{thm:3}
  Let $\mathcal{X}$ be a Banach space. For a cylindrical measure
  \begin{equation*}
    M\in \mathscr{W}(\psi_{h_{\gamma}},\gamma\in C)
  \end{equation*}
  to be concentrated as a Borel Radon measure $\mu$ on $\mathcal{X}'_{\mathrm{w}}$, it is sufficient that
  \ref{hyp:4} is satisfied.
\end{thm}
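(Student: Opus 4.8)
The plan is to read Theorem~\ref{thm:3} as the synthesis of Lemma~\ref{lemma:2}, Lemma~\ref{lemma:3}, and the finite-dimensional estimates already carried out in the proof of Theorem~\ref{thm:1}. By Hypothesis~\ref{hyp:4} the Weyl algebra is $\mathbf{W}_{h_\gamma}(\mathcal{X},\sigma)$, so in the notation of Section~\ref{sec:cylindr-quant-regul} one has $V'=\mathcal{X}$ and $V=\mathcal{X}'_{\mathrm{w}}$; each $\Psi\in F(\mathcal{X}'_{\mathrm{w}})$ of even codimension $2n$ therefore has a $2n$-dimensional \emph{symplectic} polar $\Psi^{\circ}\subset\mathcal{X}$ on which the number operator $N_{\omega_h,\Psi}$ is defined. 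I would first record that Hypothesis~\ref{hyp:4} guarantees no loss of mass (as does Hypothesis~\ref{hyp:2} in Theorem~\ref{thm:1}), so that $\mathscr{W}(\psi_{h_\gamma},\gamma\in C)$ consists of genuine cylindrical measures and the restricted generating functionals converge along $\Psi^{\circ}$. Then, by Lemma~\ref{lemma:2}, proving concentration on $\mathcal{X}'_{\mathrm{w}}$ reduces to verifying the tightness bound~\eqref{eq:16} for every $\Psi$ of even codimension, which is a cofinal subfamily as noted in the proof of that lemma.

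The second step is to convert~\eqref{eq:16} into a statement about the pushforward measures $\mu_{\Psi^{\circ}}$ on the finite-dimensional spaces $\Psi^{\circ}$. Here Lemma~\ref{lemma:3} is the essential geometric input: the identification $K\cap\Psi^{\circ}={}^{\mathrm{t}}p_{\Psi}\bigl(p_{\Psi}(K^{\circ})^{\circ}\bigr)$, together with the duality~\eqref{eq:21}, shows that the complement set appearing in~\eqref{eq:16} corresponds, under $\jmath$ and ${}^{\mathrm{t}}p_{\Psi}$, to the exterior of a large Euclidean ball of $\Psi^{\circ}$. Consequently~\eqref{eq:16} is equivalent to~\eqref{eq:20}, i.e.\ to $\lim_{r\to\infty}\mu_{\Psi^{\circ}}\bigl(\{\zeta\in\Psi^{\circ},\lvert\zeta\rvert^{2}\geq r\}\bigr)=0$, and what now has to be shown is that this decay holds with a rate that does not depend on $\Psi$.

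The third step is to establish~\eqref{eq:20} by importing the argument from the proof of Theorem~\ref{thm:1} almost verbatim. For fixed $\Psi$ the squared Euclidean norm $q$ on $\Psi^{\circ}$ is, up to a lower-order zero-point constant, the Weyl symbol of $N_{\omega_h,\Psi}$; the elliptic operator $\mathbf{Op}^{h}_{\frac12}(1+q)$ then controls the smooth cut-offs $\chi(r^{-1/2}\,\cdot\,)$ through the same operator inequality as in Theorem~\ref{thm:1}. Combining that inequality with the number-operator bound~\eqref{eq:23} and passing to the limit along a suitable subnet via Proposition~\ref{prop:2} yields $\mu_{\Psi^{\circ}}\bigl(\{\lvert\zeta\rvert^{2}\geq r\}\bigr)\leq\tilde{K}_{\tilde\delta}\,(1+r)^{-\tilde\delta}$. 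Since Hypothesis~\ref{hyp:4} asserts~\eqref{eq:23} with a constant $\mathcal{K}$ that is the supremum over \emph{all} $\Psi$ of even codimension, the resulting constant $\tilde{K}_{\tilde\delta}$ is likewise uniform in $\Psi$, which is exactly the hypothesis of the Prokhorov criterion invoked in Lemma~\ref{lemma:2}.

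I expect the only genuinely delicate point to be this uniformity over the uncountable family of even-codimension subspaces. In Theorem~\ref{thm:1} the estimates were run along a single increasing sequence $E_R$, whereas the present tightness argument requires that the decay rate in~\eqref{eq:20} not deteriorate as $\Psi$ ranges over all of $F(\mathcal{X}'_{\mathrm{w}})$. The point to check is that the constants in the Hörmander-class symbol estimates of Theorem~\ref{thm:1} depend on $\Psi$ only through its dimension $2n$ and the Euclidean structure induced by $\sigma$, hence are dimension-uniform; once this is granted, the supremum built into Hypothesis~\ref{hyp:4} furnishes the missing uniformity and the three steps above close the argument.
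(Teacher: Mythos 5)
Your proposal follows the paper's own proof essentially step for step: Lemma~\ref{lemma:2} (Prokhorov tightness via Banach--Alaoglu on the cofinal family of even-codimension subspaces), Lemma~\ref{lemma:3}'s bipolar identification converting~\eqref{eq:16} into the ball-decay condition~\eqref{eq:20}, and then the pseudodifferential/number-operator argument imported from the proof of Theorem~\ref{thm:1}, with the bound~\eqref{eq:23} supplied by Hypothesis~\ref{hyp:4}. The uniformity over $\Psi$ that you flag as the delicate point is handled in the paper exactly as you suggest: the finite-dimensional estimates depend on $\Psi$ only through the number-operator bound, and the supremum over all even-codimension subspaces built into Hypothesis~\ref{hyp:4} furnishes the uniform decay rate required by the Prokhorov criterion.
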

\begin{remark}
  \label{rem:1}
  We are not aware of explicit examples of vectors $(\psi_{h_{\gamma}})_{\gamma\in C}$ that satisfy \ref{hyp:4} when
  $\mathcal{X}$ is not a (separable) Hilbert space. On the contrary, there are many explicit examples of $V$, $X$, $t(\cdot ,\cdot )$,
  and $(\psi_{h_{\gamma}})_{\gamma\in C}$ such that Theorem~\ref{thm:1} holds. It would be interesting to find
  explicit examples where Theorem~\ref{thm:3} can be applied and $\mathcal{X}$ is \emph{e.g.} a separable reflexive
  Banach space.
\end{remark}
\begin{remark}
  \label{rem:2}
  If $\mathcal{X}=L^{\frac{p}{p-1}} (\mathds{R}^d )$, $1<p<\infty$, then $\lVert \,\cdot \, \rVert_p^{}$ is a Borel measurable
  function in $\mathcal{X}'_{\mathrm{w}}=L^p_{\mathrm{w}}$, where $L^p_{\mathrm{w}}$ is $L^p$ with the
  $\sigma(L^p,L^{\frac{p}{p-1}})$ topology.
\end{remark}

\subsection{Applications of Theorem \ref{thm:1}}
\label{sec:some-expl-exampl}

In this section, let us discuss some other concrete situations in which the concentration of Wigner measures
on ``interesting'' spaces plays an important role.



\subsubsection{Massless fields and homogeneous Sobolev spaces, revisited}
\label{sec:massl-fields-homog}
\begin{itemize}
  \setlength{\itemsep}{7pt}
\item $X=\mathscr{S}(\mathds{R}^d)$
\item $t(f,g )=\int_{\mathds{R}^d}^{}\lvert k \rvert_{}^{2s}\bar{f}(k)g(k) \mathrm{d}k$, $s<\frac{d}{2}$
\item $V=\mathbb{F}\, L^2(\mathds{R}^d)$, $\sigma(\,\cdot\, ,\,\cdot\, )=\Im \langle \,\cdot \, , \,\cdot \, \rangle_2$
\item $\mathbf{m}\bigl(f(k)\bigr)= \lvert k \rvert_{}^{s}f(k)$
\item $(\psi_h)_{h\in (0,1)}\subset \Gamma_{\mathrm{s}}\bigl(L^2 (\mathds{R}^d )\bigr)_h$ (Fock representation)
\item $\exists \delta>0$, $\exists \mathcal{K}>0$, $\langle \psi_h , \mathrm{d}\Gamma_h(\lvert k \rvert_{}^{2s})^{\delta}\psi_h \rangle_{\Gamma_{\mathrm{s}}}\leq \mathcal{K}$
\end{itemize}
\vspace{3pt} These definitions provide the link between the motivating problem of
Section~\ref{sec:motivation}, and Theorem~\ref{thm:1}. In fact, from the definitions it follows that
\begin{equation*}
  X_q=L^2 (\mathds{R}^d, \lvert k \rvert_{}^{2s}\mathrm{d}k )=\mathscr{F}\dot{H}^s(\mathds{R}^d)\; ;
\end{equation*}
and
\begin{equation*}
  \mathbf{n}_{h,\infty}^{\delta}(\cdot )=\langle \,\cdot\, , \mathrm{d}\Gamma_h(\lvert k \rvert_{}^{2s})^{\delta}\,\cdot \, \rangle_{\Gamma_{\mathrm{s}}}\; ,
\end{equation*}
where $\mathrm{d}\Gamma_h(\lvert k \rvert_{}^{2s})$ is the second quantization of the multiplication operator
$\lvert k \rvert_{}^{2s}$ on $L^2(\mathds{R}^d)$. Therefore by Theorem~\ref{thm:1},
\begin{align*}
  M\in \mathscr{W}\bigl(\psi_{h},\, h\in (0,1)\bigr)\, &\Rightarrow \, M \text{ is concentrated as } \mu \in \mathscr{M}_{\mathrm{rad}}\bigl(\mathscr{F}\dot{H}^s(\mathds{R}^d)\bigr) \; ;\\
  &\int_{\mathscr{F}\dot{H}^s}^{}\lVert f  \rVert_{\mathscr{F}\dot{H}^s}^{\delta}  \mathrm{d}\mu(f)\leq \mathcal{K}\; .
\end{align*}
The above result, with $s=\frac{1}{2}$ and $d\geq 2$, can be used to solve the question of convergence of the
ground state energy in both the classical and quasi-classical limit, for massless Nelson-type models; these
questions were left open respectively
in~\citep{Ammari:2014aa
} and~\citep{Correggi:2017aa
}.

If in addition, $(\psi_h)_{h\in (0,1)}$ satisfies also
\begin{equation*}
  \langle \psi_h  , \mathrm{d}\Gamma_h(1)^{\tilde{\delta}}\psi_h \rangle_{\Gamma_{\mathrm{s}}}\leq \tilde{\mathcal{K}}\; ,
\end{equation*}
then any $M\in \mathscr{W}\bigl(\psi_{h},\, h\in (0,1)\bigr)$ is concentrated as a Radon measure $\mu$ that is a Borel
measure on both $L^2(\mathds{R}^d)$ and $\mathscr{F} \dot{H}^s(\mathds{R}^d)$. Furthermore, the measure of the
symmetric difference is zero:
\begin{equation*}
  \mu\bigl(L^2 (\mathds{R}^d  )\ominus \mathscr{F}\dot{H}^s(\mathds{R}^d) \bigr)=0\; .
\end{equation*}

Finally, let us remark that -- for time-evolved states in massless dynamical quantum theories (such as the
Pauli-Fierz model) -- the condition
\begin{equation*}
  \langle \psi_h(t) , \mathrm{d}\Gamma_h(\lvert k \rvert_{})\psi_h(t) \rangle_{\Gamma_{\mathrm{s}}}\leq \mathcal{K}(t)
\end{equation*}
is much easier to prove, provided it holds at time zero, than
\begin{equation*}
  \langle \psi_h(t) , \mathrm{d}\Gamma_h(1) \psi_h(t) \rangle_{\Gamma_{\mathrm{s}}}\leq \tilde{\mathcal{K}}(t)\; .
\end{equation*}

\subsubsection{Non-homogeneous Sobolev spaces with negative index}
\label{sec:non-homog-sobol}
\begin{itemize}
  \setlength{\itemsep}{7pt}
\item $X=L^2(\mathds{R}^d)$
\item $t(f,g )=\int_{\mathds{R}^d}^{}(1-\Delta)^{2r}\bar{f}(x)g(x) \mathrm{d}x$, $r<0$
\item $V=\mathbb{F}\, L^2(\mathds{R}^d)$, $\sigma(\,\cdot\, ,\,\cdot\, )=\Im \langle \,\cdot \, , \,\cdot \, \rangle_2$
\item $\mathbf{m}\bigl(f(x)\bigr)= (1-\Delta)^{r} f(x)$
\item $(\psi_h)_{h\in (0,1)}\subset \Gamma_{\mathrm{s}}\bigl(L^2 (\mathds{R}^d )\bigr)_h$
\item $\exists \delta>0$, $\exists \mathcal{K}>0$, $\langle \psi_h , \mathrm{d}\Gamma_h\bigl((1-\Delta)^{2r}\bigr)^{\delta}\psi_h \rangle_{\Gamma_{\mathrm{s}}}\leq \mathcal{K}$
\end{itemize}
In this case, $X_q=H^{r}(\mathds{R}^d)\supset L^2 (\mathds{R}^d )$, the non-homogeneous Sobolev space. In studying
the thermodynamic properties of bosonic quantum fields, and their classical counterparts, one has to deal with
quantum states whose corresponding Wigner measures are concentrated outside of the classical phase space. The
Gibbs states provide a very interesting example
\citep{Lewin:2014aa
,Frohlich:2016aa
}.

By means of Theorem~\ref{thm:1}, we know that
\begin{align*}
  M\in \mathscr{W}\bigl(\psi_{h},\, h\in (0,1)\bigr)\, &\Rightarrow \, M \text{ is concentrated as } \mu \in \mathscr{M}_{\mathrm{rad}}\bigl(H^{r}(\mathds{R}^d)\bigr) \; ;\\
  &\int_{H^{r}}^{}\lVert f  \rVert_{H^{r}}^{\delta}  \mathrm{d}\mu(f)\leq \mathcal{K}\; .
\end{align*}
Let us provide an explicit example of a sequence of vectors in the Fock space whose corresponding Wigner
measure is concentrated on $H^r\setminus L^2$. Let
$\Omega\in \Gamma_{\mathrm{s}}\bigl(L^2 (\mathds{R}^d )\bigr)$ be the Fock vacuum vector; and consider a
family $(\psi_h)_{h\in (0,1)}$ of squeezed coherent vectors of the form
\begin{equation*}
  \psi_{h}=e^{\frac{1}{h}(a^{*}(f_h)-a(f_h))}\,\Omega\; ,
\end{equation*}
$f_h\in L^2 (\mathds{R}^d )$. Then the associated generating functional takes the form
\begin{equation*}
  G_{\psi_h}(\cdot )=e^{i\, \Re\langle \,\cdot\,   , f_h \rangle_2}e^{-\frac{h}{2} \lVert f_h  \rVert_2^2}\; ,
\end{equation*}
and it satisfies
\begin{equation*}
  \langle \psi_h , \mathrm{d}\Gamma_h\bigl((1-\Delta)^{2r}\bigr)\psi_h \rangle_{\Gamma_{\mathrm{s}}}=\lVert f_h  \rVert_{H^r}^2\; ,\; \langle \psi_h , \mathrm{d}\Gamma_h(1)\psi_h \rangle_{\Gamma_{\mathrm{s}}}=\lVert f_h  \rVert_2^2\; .
\end{equation*}
If the sequence $(f_h)_{h\in (0,1)}$ satisfies:
\begin{equation*}
  f_h\to_{H^r}f\in H^r\setminus L^2 \; ,\; \sup_{h\in (0,1)}\lVert f_h  \rVert_{H^r}^{}=\mathcal{K}<\infty\; ,\; 
\end{equation*}
then any $M\in \mathscr{W}\bigl(\psi_h,h\in (0,1)\bigr)$ is concentrated as a measure $\mu$ on $H^r$, but we may have
$\mu(L^2)=0$. In fact, if
\begin{equation*}
  \lVert f_h \rVert_2^2=O(h^{\varepsilon-1})\; ,\; 0<\varepsilon< 1\; ;
\end{equation*}
then $\mathscr{W}\bigl(\psi_h,h\in (0,1)\bigr)=\{M_{\psi}\}$, where $M_{\psi}$ concentrates as the atomic measure
$\delta_f$ on $f\in H^r\setminus L^2$: for any $\varphi\in H^{\lvert r \rvert_{}^{}}$
\begin{equation*}
  \lim_{h\to 0}G_{\psi_h}(\varphi)=e^{i\, \Re  \int_{\mathds{R}^d}^{}\bar{\varphi}(x)f(x)  \mathrm{d}x}=\int_{H^r}^{}e^{i\, \Re \int_{\mathds{R}^d}^{}\bar{\varphi}(x)g(x)  \mathrm{d}x}  \mathrm{d}\,\delta_f(g)\; .
\end{equation*}

\subsubsection{Free quantum evolution and pushforward measure}
\label{sec:free-evol-pushf}

Recover the setting of Section~\ref{sec:massl-fields-homog}. Let us now discuss how a (free) dynamical group
acting on quantum states transforms the corresponding Wigner measures accordingly, following the classical
dynamics. Let $\omega(k)$ be any self-adjoint multiplication operator on $L^2(\mathds{R}^d , \mathrm{d}k)$,
and $(e^{it \omega(k)})_{t\in \mathds{R}}$ the associated strongly continuous unitary group. Then
$(e^{it \omega(k)})_{t\in \mathds{R}}$ extends to a strongly continuous unitary group on
$L^2 (\mathds{R}^d, \lvert k \rvert_{}^{2s}\mathrm{d}k )$. The associated second quantized unitary operator
$\Gamma_h(e^{it\omega})=e^{it \mathrm{d}\Gamma_h(\omega)}$ commutes with
$\mathrm{d}\Gamma_h(\lvert k \rvert_{}^{2s})$, and therefore
$\langle \psi_h , \mathrm{d}\Gamma_h(\lvert k \rvert_{}^{2s})^{\delta}\psi_h \rangle_{\Gamma_{\mathrm{s}}}\leq \mathcal{K}$ iff
\begin{equation*}
  \forall t\in \mathds{R}\;,\; \langle \psi_h , e^{-it \mathrm{d}\Gamma_h(\omega)}\mathrm{d}\Gamma_h(\lvert k \rvert_{}^{2s})^{\delta}e^{-it \mathrm{d}\Gamma_h(\omega)}\psi_h \rangle_{\Gamma_{\mathrm{s}}}\leq \mathcal{K}\; .
\end{equation*}
It follows that any $M_t\in \mathscr{W}\bigl(e^{i t \mathrm{d}\Gamma_h(\omega)}\psi_h,h\in (0,1)\bigr)$ is concentrated as a
Radon measure $\mu_t$ on $\mathscr{F}\dot{H}^s(\mathds{R}^d)$, for any $t\in \mathds{R}$. As showed in
\citep[][Proposition 6.1]{Falconi:2016ab
}, $M_t= e^{it\omega}\, _{*}\, M_0$, the pushforward image by $e^{it\omega}\in \mathcal{B}(L^2)$ of the cylindrical measure
$M_0$. Now since $e^{it\omega}\in \mathcal{B}(\mathscr{F} \dot{H}^s)$ as well, it follows that
\begin{equation*}
  \mu_t= e^{it\omega}\, _{*}\, \mu_0\; .
\end{equation*}

{\footnotesize
\bibliography{/Users/lobo/Library/texmf/bibtex/bib}}
\end{document}